\newtheorem{theorem}{Theorem}[section]
\newtheorem{lemma}[theorem]{Lemma}
\newtheorem{corollary}[theorem]{Corollary}
\newtheorem{definition}[theorem]{Definition}
\newtheorem{remark}[theorem]{Remark}
\numberwithin{equation}{section}
\def\R{{\mathbb{R}}}
\def\Rn{{\mathbb{R}^n}}
\def\G{{\mathbb{G}}}
\def\a {\alpha}
\def\b {\beta}
\def\i{\infty}
\def\G{\mathbb{G}}
\def\Lloc{L^1_{\rm loc}(\G)}
\def\dual{\,^{^{\complement}}\!}
\def\dual{\,^{^{\complement}}\!}
\begin{document}

\begin{center}
\Large Characterizations of Lipschitz functions via the commutators of maximal function in Orlicz spaces on stratified Lie groups
\end{center}

\

\centerline{\large V.S. Guliyev$^{a,}$
	\footnote{E-mail address: vagif@guliyev.com}}

\

\centerline{$^{a}$\it Institute of Applied Mathematics, Baku State University}
\centerline{\it Baku, AZ 1148 Azerbaijan}
\

\begin{abstract}
 We give necessary and sufficient conditions for the boundedness of the maximal commutators $M_{b}$, the commutators of the maximal operator $[b, M]$ and the commutators of the sharp maximal operator $[b, M^{\sharp}]$ in Orlicz spaces $L^{\Phi}(\mathbb{G})$ on any stratified Lie group $\mathbb{G}$ when $b$ belongs to Lipschitz spaces $\dot{\Lambda}_{\beta}(\G)$. We obtain some new characterizations for certain subclasses of Lipschitz spaces $\dot{\Lambda}_{\beta}(\G)$.
\end{abstract}

\

\noindent{\bf AMS Mathematics Subject Classification:} 42B25, 42B35, 43A80, 46E30.

\noindent{\bf Key words:} {Stratified group, Orlicz space, fractional maximal function, sharp maximal function, commutator, Lipschitz function.}

\section{Introduction}

The aim of this paper is to study the maximal commutators $M_{b}$, the commutators of the maximal operator $[b, M]$ and the commutators of the sharp maximal operator $[b, M^{\sharp}]$ in the Orlicz spaces $L^{\Phi}(\mathbb{G})$ on any stratified Lie group $\mathbb{G}$.

Let $\mathbb{G}$ be a stratified Lie group, $f\in \Lloc $ and $0 \le \a < Q$, where $Q$ is the homogeneous dimension of $\G$. The fractional maximal function $M_{\a} f$ is  defined by
\begin{equation*}
M_{\a}f(x)=\sup_{B \ni x} |B|^{-1+\frac{\a}{Q}} \, \int_{B} |f(y)|dy,
\end{equation*}
and the sharp maximal function of Fefferman and Stein $M^{\sharp} f$ is  defined by
\begin{equation*}
	M^{\sharp}f(x)=\sup_{B \ni x} |B|^{-1} \, \int_{B} |f(y) - f_{B}|dy,
\end{equation*}
where the supremum is taken over all balls $B \subset \G$ containing $x$, and $|B|$ is the Haar measure of the $\G$-ball $B$.

The fractional maximal commutator generated by $b\in L^1_{\rm loc}(\G)$ and $M_{\a}$ is defined by
\begin{equation*}
M_{b,\a}(f)(x)=\sup_{B \ni x} |B|^{-1+\frac{\alpha}{Q}} \, \int_{B} |b(x)-b(y)||f(y)|dy.
\end{equation*}

The commutators generated by $b\in L^1_{\rm loc}(\G)$ and $M_{\a}$, $M^{\sharp}$ are defined by
\begin{align*}
[b,M_{\a}] f(x)&=b(x) M_{\a} f(x) - M_{\a}(bf)(x)
\end{align*}
and
\begin{align*}
[b,M^{\sharp}] f(x)&=b(x) M^{\sharp} f(x) - M^{\sharp}(bf)(x).
\end{align*}

When $\a = 0$, we simply denote by $[b,M]=[b,M_{0}]$ and $M_{b}=M_{b,0}$.

In 1978, Janson \cite{Janson} gave some characterizations of the Lipschitz space $\dot{\Lambda}_{\beta}(\Rn)$ via commutator $[b, T]$ and the author proved that $b \in \dot{\Lambda}_{\beta}(\Rn)$ if and only if $[b, T]$ is bounded from $L^p(\Rn)$ to
$L^q(\Rn)$, where $1<p<n/\b$, $1/p-1/q=\b/n$ and $T$ is the classical singular integral operator (see also \cite{Pal1995}).
The mapping properties of $M_{b,\a}$, $[b,M_{\a}]$ and $[b,M^{\sharp}]$ have been studied extensively by many authors. See, for instance, \cite{AkbBurGul2020, BasMilRui, CruzFio, GarHarSegTor, GulDerHasMN, GulDerHasAMP, ZhangWu2009, ZhWuSun}. The operator $M_{b}:=M_{b,0}$ plays an important role in the study of commutators of singular integral operators with $BMO$ symbols (see, for instance, \cite{AgcGogKMus, AykArmOm2016, GarHarSegTor, HuYang2009, SegTorrPM1991}). The operators $M_{\a}$, $[b,M_{\a}]$, $M_{b,\a}$ and $[b,M^{\sharp}]$ play an important role in real and harmonic analysis and applications (see, for example \cite{BonfLanUg,FolSt,DerGulHasPis2018,GulEkKayaSaf,Stein93,ZhWuSun,ZhangAMP2019}).
The nonlinear commutator of maximal function $[b,M]$ can be used in studying the product of a function in $H_1$ and a function in $BMO$ (see \cite{BonIwanJZ2007} for instance).
Note that, the boundedness of the operator $M_{b}$ on $L^p(\Rn)$ spaces was proved by Garcia-Cuerva et al. \cite{GarHarSegTor}. In \cite{BasMilRui} by Bastero et al. studied the necessary and sufficient condition for the boundedness of $[b,M]$ on $L^p(\Rn)$ spaces. In \cite{ZhangWu2009} by Zhang and Wu considered the same problem for $[b,M_{\a}]$, see also \cite{ZhWuSun}.

In \cite{GulResMath2022} recently gave necessary and sufficient conditions for the boundedness of the fractional maximal commutators in the Orlicz spaces $L^{\Phi}(\mathbb{G})$ on any stratified Lie group $\G$ when $b$ belongs to $BMO(\G)$ spaces, and was obtained some new characterizations for certain subclasses of $BMO(\G)$ spaces. Stratified groups appear in quantum physics and many parts of
mathematics, including Fourier analysis, several complex variables, geometry, and topology \cite{BonfLanUg,FolSt,VarSaloffCou}. The geometric structure of stratified Lie groups is so good that they inherit many analysis properties from the Euclidean spaces \cite{Graf2009,Stein93}. Apart from this, the difference between the geometric structures of Euclidean spaces and stratified Lie groups makes the study of the function spaces on them more complicated. However, the study of Orlicz spaces on stratified Lie groups is quite a few, which makes it deserve a further investigation. In 2017, Zhang \cite{Zh2017} considered some new characterizations of the Lipschitz spaces $\dot{\Lambda}_{\beta}(\Rn)$ via the boundedness of maximal commutator $M_{b}$ and the (nonlinear) commutator $[b,M]$ in Lebesgue spaces
and Morrey spaces on Euclidean spaces.

Inspired by the above literature, our main aim is to characterize the commutator functions $b$, involved in the boundedness on Orlicz spaces of the maximal commutator $M_{b}$ (Theorems \ref{CommMaxCharOr017}, \ref{CommFrMaxCharOr01W}), the commutator of the fractional maximal operator $[b,M]$ (Theorem \ref{dnndk}) and the commutator of the sharp maximal operator $[b, M^{\sharp}]$ (Theorem \ref{dnndkCommSharpMax}). Actually, such a characterization was done in \cite[Theorems 4.5, 4.9,  4.13]{GulDerHasAMP} in the Eucledian case $\G=\Rn$.

By $A \lesssim B$ we mean that $A \le C B$ with some positive constant $C$
independent of appropriate quantities. If $A \lesssim B$ and $B \lesssim A$, we
write $A\approx B$ and say that $A$ and $B$ are  equivalent.

\section{Notations}

$~~~$ We first recall some preliminaries concerning stratified Lie groups
(or so-called Carnot groups). We refer the reader to the books \cite{BonfLanUg, FolSt, Stein93} for analysis on stratified Lie groups.
Let $\mathcal{G}$ be a finite-dimensional, stratified, nilpotent Lie algebra.
Assume that there is a direct sum vector space decomposition
\begin{equation}\label{1X}
\mathcal{G}=V_1\oplus\cdots\oplus V_m
\end{equation}
so that each element of $V_j$, $2\leq j\leq m$, is a linear combination of
($j-1$)th order commutator of elements of $V_1$. Equivalently,
\eqref{1X} is a stratification provided $\left[V_i,V_j\right]=V_{i+j}$
whenever $i+j\leq m$ and $\left[V_i,V_j\right] =0$ otherwise.
Let $X=\{X_1,\ldots,X_n\}$ be a basis for $V_1$ and $X_{ij},$
$1\le i\le k_j,$ for $V_j$ consisting of commutators of length $j$.
We set $X_{i1}=X_{i},$ $i=1,\ldots,n$ and $k_1=n$,
and we call $X_{i1}$ a commutator of length $1$.

If $\G$ is the simply connected Lie group associated with $\mathcal{G}$,
then the exponential mapping is a global diffeomorphism from $\mathcal{G}$ to $\G$.
Thus, for each $g \in \G$, there is $x=(x_{ij})\in\mathbb{R}^N,$
$1\le i\le k_j,$ $1\le j\le m$, $N=\sum\limits_{j=1}^{m} k_j$,
such that $g=\exp \left(\sum x_{ij} X_{ij}\right)$.
A homogeneous norm function $|\cdot|$ on $\G$ is defined by
$|g|=\left(\sum |x_{ij}|^{2\cdot m!/j}\right)^{1/(2\cdot m!)}$, and
$Q=\sum\limits_{j=1}^{m} j k_j$ is said to be the {\it homogeneous dimension}
of $\G$, since $d(\delta_r x)=r^Q dx$ for $r>0$.
The dilation $\delta_{r}$ on $\G$ is defined by
\begin{align*}
\delta_{r}(g)=\exp\left(\sum r^{j}x_{ij}X_{ij}\right)\quad\mbox{if}
\quad g=\exp\left(\sum x_{ij}X_{ij}\right).
\end{align*}

Since $\G$ is nilpotent, the exponential map is diffeomorphism from
$\G$ onto $\G$ which takes Lebesgue measure on $\G$ to a biinvariant Haar measure
{\it dx} on $\G$. The group identity of $\G$ will be referred to as the origin and denoted by $e$.

A homogenous norm on $\G$ is a continuous function $x\rightarrow\rho(x)$
from $\G$ to $[0,\infty)$, which is $C^\infty$ on $\G\backslash\{0\}$ and
satisfies $\rho(x^{-1})=\rho(x)$, $\rho(\delta_t x)=t\rho(x)$ for all
$x\in \G$, $t>0$; $\rho(e)=0$ (the group identity). Moreover,
there exists a constant $c_0 \geq 1$ such that $\rho(xy)\le c_0\left(\rho(x)+\rho(y)\right)$
for all $x,y\in \G$.
With this norm, we define the $\G$-ball centered at $x$ with radius $r$ by $B(x,r)=\{y\in\G:\rho(y^{-1} x)
<r\}$, and we denote by $B_r=B(e,r)=\{y \in \G ~:~\rho(y)< r\}$ the open ball centered at $e$,
the identity element of $\G$, with radius $r$. By ${\dual B}(x,r)=\G \setminus B(x,r)$
we denote the complement of $B(x,r)$. One easily recognizes that there exists $c_1=c_1(\G)$ such that
\begin{align*}
|B(x, r)|=c_1 \; r^{Q},~~ x \in \G, ~ r>0.
\end{align*}
The most basic partial differential operator in a stratified Lie group is the sub-Laplacian
associated with $X$ is the second-order partial differential operator on $\G$ given by
$\mathcal{L}=\sum_{i=1}^{n} X_i^2.$

First, we recall the definition of Young functions.
\begin{definition}\label{def2} A function $\Phi : [0,\infty) \rightarrow [0,\infty]$ is called a Young function if $\Phi$ is convex, left-continuous, $\lim\limits_{r\rightarrow +0} \Phi(r) = \Phi(0) = 0$ and $\lim\limits_{r\rightarrow \infty} \Phi(r) = \infty$.
\end{definition}
From the convexity and $\Phi(0) = 0$ it follows that any Young function is increasing.
If there exists $s \in  (0,\infty )$ such that $\Phi(s) = \infty $,
then $\Phi(r) = \infty $ for $r \geq s$.
The set of  Young  functions such that
\begin{equation*}
0<\Phi(r)<\infty \qquad \text{for} \qquad 0<r<\infty
\end{equation*}
will be denoted by  $\mathcal{Y}.$
If $\Phi \in  \mathcal{Y}$, then $\Phi$ is absolutely continuous on every closed interval in $[0,\infty )$
and bijective from $[0,\infty )$ to itself.

For a Young function $\Phi$ and  $0 \leq s \leq \infty $, let
$$\Phi^{-1}(s)=\inf\{r\geq 0: \Phi(r)>s\}.$$
If $\Phi \in  \mathcal{Y}$, then $\Phi^{-1}$ is the usual inverse function of $\Phi$.

It is well known that
\begin{equation}\label{2.3}
r\leq \Phi^{-1}(r)\widetilde{\Phi}^{-1}(r)\leq 2r \qquad \text{for } r\geq 0,
\end{equation}
where $\widetilde{\Phi}(r)$ is defined by
\begin{equation*}
\widetilde{\Phi}(r)=\left\{
\begin{array}{ccc}
\sup\{rs-\Phi(s): s\in  [0,\infty )\}
& , & r\in  [0,\infty ), \\
\infty &,& r=\infty .
\end{array}
\right.
\end{equation*}

A Young function $\Phi$ is said to satisfy the $\Delta_2$-condition, written $\Phi \in  \Delta_2$, if
$\Phi(2r)\le C\Phi(r)$, $r>0$ for some $C>1$. If $\Phi \in  \Delta_2$, then $\Phi \in  \mathcal{Y}$. A Young function $\Phi$ is said to satisfy the $\nabla_2$-condition, denoted also by  $\Phi \in  \nabla_2$, if $\Phi(r)\leq \frac{1}{2C}\Phi(Cr)$, $r\geq 0$ for some $C>1$.

\begin{definition} (Orlicz Space). For a Young function $\Phi$, the set
	$$L^{\Phi}(\G)=\left\{f\in  L^1_{\rm loc}(\G): \int _{\G}\Phi(k|f(x)|)dx<\infty
	\text{ for some $k>0$  }\right\}$$
	is called Orlicz space. The  space $L^{\Phi}_{\rm loc}(\G)$ is defined as the set of all functions $f$ such that  $f\chi_{_B}\in  L^{\Phi}(\G)$ for all balls $B \subset \G$.
\end{definition}
$L^{\Phi}(\G)$ is a Banach space with respect to the norm
$$
\|f\|_{L^{\Phi}(\G)}=\inf\left\{\lambda>0:\int _{\G}\Phi\Big(\frac{|f(x)|}{\lambda}\Big)dx
\leq 1\right\}.
$$

If $\Phi(r)=r^{p},\, 1\le p<\infty $, then $L^{\Phi}(\G)=L^{p}(\G)$. If $\Phi(r)=0$, $0\le r\le 1$ and $\Phi(r)=\infty$, $r> 1$, then $L^{\Phi}(\G)=L^{\infty}(\G)$.

For a measurable set $D \subset \G$, a measurable function $f$ and $t>0$, let
$
m(D,\ f,\ t)=|\{x\in D:|f(x)|>t\}|.
$
In the case $D=\G$, we shortly denote it by $m(f,\ t)$.
\begin{definition} The weak Orlicz space
	$$
	WL^{\Phi}(\G)=\{f\in L^{1}_{\rm loc}(\G):\Vert f\Vert_{WL^{\Phi}}<\infty\}
	$$
	is defined by the norm
	$$
	\Vert f\Vert_{WL^{\Phi}}=\inf\Big\{\lambda>0\ :\ \sup_{t>0}\Phi(t)m\Big(\frac{f}{\lambda},\ t\Big)\ \leq 1\Big\}.
	$$
\end{definition}
We note that $\|f\|_{WL^{\Phi}}\le \|f\|_{L^{\Phi}}$.

The following analogue of the H\"older's inequality is well known (see, for example, \cite{RaoRen}).
\begin{theorem}\label{HolderOr}
	Let $D\subset\G$ be a measurable set and $f, g$ be measurable functions on $D$. For a Young function $\Phi$ and its complementary function  $\widetilde{\Phi}$,
	the following inequality is valid
	\begin{equation}\label{lemHoldX}
	\int_{D}|f(x)g(x)|dx \leq 2 \|f\|_{L^{\Phi}(D)} \|g\|_{L^{\widetilde{\Phi}}(D)}.
	\end{equation}
\end{theorem}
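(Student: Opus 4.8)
The plan is to derive \eqref{lemHoldX} from Young's inequality for the complementary pair $(\Phi,\widetilde{\Phi})$, combined with the defining property of the Luxemburg norm. First I would dispose of the degenerate cases. If $\|f\|_{L^{\Phi}(D)}=0$, then letting $\lambda\to0$ in $\int_{D}\Phi(|f(x)|/\lambda)\,dx\le1$ and using Fatou's lemma together with $\lim_{r\to\infty}\Phi(r)=\infty$ forces $f=0$ almost everywhere on $D$, so both sides of \eqref{lemHoldX} vanish; the same reasoning applies if $\|g\|_{L^{\widetilde{\Phi}}(D)}=0$. If one of the two norms is infinite while the other is positive, the right-hand side is infinite and there is nothing to prove. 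Hence it suffices to treat the case $0<\lambda:=\|f\|_{L^{\Phi}(D)}<\infty$ and $0<\mu:=\|g\|_{L^{\widetilde{\Phi}}(D)}<\infty$.

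The elementary engine of the proof is Young's inequality: for all $a,b\ge0$,
\begin{equation*}
ab\le\Phi(a)+\widetilde{\Phi}(b),
\end{equation*}
which is immediate from the definition $\widetilde{\Phi}(b)=\sup_{s\ge0}\{bs-\Phi(s)\}\ge ab-\Phi(a)$. Applying this with $a=|f(x)|/\lambda$ and $b=|g(x)|/\mu$ yields the pointwise bound
\begin{equation*}
\frac{|f(x)g(x)|}{\lambda\mu}\le\Phi\Big(\frac{|f(x)|}{\lambda}\Big)+\widetilde{\Phi}\Big(\frac{|g(x)|}{\mu}\Big)\qquad(x\in D).
\end{equation*}
Integrating over $D$ and using $\int_{D}\Phi(|f(x)|/\lambda)\,dx\le1$ together with $\int_{D}\widetilde{\Phi}(|g(x)|/\mu)\,dx\le1$ gives $\frac{1}{\lambda\mu}\int_{D}|f(x)g(x)|\,dx\le2$, that is, $\int_{D}|f(x)g(x)|\,dx\le2\lambda\mu$, which is exactly \eqref{lemHoldX}.

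The one step that requires an actual argument, and is thus the main (if mild) obstacle, is the claim $\int_{D}\Phi(|f(x)|/\|f\|_{L^{\Phi}(D)})\,dx\le1$ when $0<\|f\|_{L^{\Phi}(D)}<\infty$ (and likewise for $\widetilde{\Phi}$ and $g$), since the infimum defining the Luxemburg norm need not be attained a priori. To establish it I would note that, by the monotonicity of $\Phi$, the set of $\nu>0$ with $\int_{D}\Phi(|f(x)|/\nu)\,dx\le1$ is an interval unbounded above whose left endpoint is $\lambda$; picking $\lambda_n\downarrow\lambda$ with $\lambda_n>\lambda$ we thus have $\int_{D}\Phi(|f(x)|/\lambda_n)\,dx\le1$ for every $n$. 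Since $\Phi$ is nondecreasing, $\Phi(|f(x)|/\lambda_n)$ increases with $n$, and by the left-continuity of $\Phi$ its limit is $\Phi(|f(x)|/\lambda)$ for every $x\in D$; the monotone convergence theorem then gives $\int_{D}\Phi(|f(x)|/\lambda)\,dx=\lim_{n\to\infty}\int_{D}\Phi(|f(x)|/\lambda_n)\,dx\le1$. This monotone-convergence point is precisely where the structural hypotheses on $\Phi$ (monotonicity and left-continuity) enter; everything else is the two-line Young inequality computation above, and the constant $2$ rather than $1$ is the familiar price of normalizing $f$ and $g$ separately in the Luxemburg norm.
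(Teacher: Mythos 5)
Your proof is correct. The paper itself gives no proof of this inequality, simply citing it as well known from Rao and Ren's monograph on Orlicz spaces; the argument you give — Young's inequality $ab\le\Phi(a)+\widetilde{\Phi}(b)$ applied to the Luxemburg-normalized functions, with the monotone-convergence/left-continuity step justifying $\int_{D}\Phi(|f|/\|f\|_{L^{\Phi}(D)})\,dx\le1$ — is precisely the standard proof found in that reference, so there is nothing to reconcile.
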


By elementary calculations we have the following property.
\begin{lemma}\label{charorlc}
Let $\Phi$ be a Young function and $D$ be a set in $\G$ with finite Haar measure. Then
	\begin{equation*}
	\|\chi_{_D}\|_{L^{\Phi}(\G)} = \|\chi_{_D}\|_{WL^{\Phi}(\G)}=\frac{1}{\Phi^{-1}\left(|D|^{-1}\right)}.
	\end{equation*}
\end{lemma}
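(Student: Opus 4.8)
The plan is to compute both norms directly from their definitions, reducing everything to the behaviour of $\Phi$ at the single point $1/\lambda$, and then to invoke the monotonicity and left-continuity of $\Phi$ to identify the resulting infimum with $1/\Phi^{-1}(|D|^{-1})$.

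First I would handle the strong Orlicz norm. Since $\chi_{_D}$ takes only the value $1$ on $D$ and $0$ off $D$, and since $\Phi(0)=0$, for every $\lambda>0$ one has $\int_{\G}\Phi\big(\chi_{_D}(x)/\lambda\big)\,dx=\Phi(1/\lambda)\,|D|$. Hence $\|\chi_{_D}\|_{L^{\Phi}(\G)}=\inf\{\lambda>0:\Phi(1/\lambda)\le|D|^{-1}\}$. Substituting $u=1/\lambda$, this equals $1/\sup\{u>0:\Phi(u)\le|D|^{-1}\}$. Because $\Phi$ is increasing and left-continuous, the set $\{u\ge0:\Phi(u)\le|D|^{-1}\}$ is an interval whose supremum is precisely $\Phi^{-1}(|D|^{-1})=\inf\{r\ge0:\Phi(r)>|D|^{-1}\}$; moreover left-continuity gives $\Phi\big(\Phi^{-1}(|D|^{-1})\big)\le|D|^{-1}$, so the supremum is attained and $\|\chi_{_D}\|_{L^{\Phi}(\G)}=1/\Phi^{-1}(|D|^{-1})$.

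Next I would handle the weak Orlicz norm. For fixed $\lambda>0$ and $t>0$, the distribution function $m(\chi_{_D}/\lambda,t)=|\{x\in\G:\chi_{_D}(x)/\lambda>t\}|$ equals $|D|$ when $0<t<1/\lambda$ and equals $0$ when $t\ge1/\lambda$. Therefore $\sup_{t>0}\Phi(t)\,m(\chi_{_D}/\lambda,t)=|D|\,\sup_{0<t<1/\lambda}\Phi(t)=|D|\,\Phi(1/\lambda)$, the last step again using that $\Phi$ is increasing and left-continuous. This is exactly the quantity that appeared in the strong-norm computation, so $\|\chi_{_D}\|_{WL^{\Phi}(\G)}=\inf\{\lambda>0:\Phi(1/\lambda)\le|D|^{-1}\}=1/\Phi^{-1}(|D|^{-1})$ as well.

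The only point demanding care is the identification of $\sup\{u:\Phi(u)\le|D|^{-1}\}$ with $\Phi^{-1}(|D|^{-1})$, and likewise of $\sup_{0<t<1/\lambda}\Phi(t)$ with $\Phi(1/\lambda)$: this is where the left-continuity of $\Phi$ is essential, both to ensure the infimum defining $\|\chi_{_D}\|_{L^{\Phi}}$ is actually attained and to exclude a jump of $\Phi$ at $1/\lambda$ inflating the supremum in the weak norm. Once this is settled the two computations coincide term by term and the lemma follows; the degenerate cases $|D|=0$ and $|D|=\infty$ are covered by the conventions $\Phi^{-1}(\infty)=\infty$ and $1/\infty=0$.
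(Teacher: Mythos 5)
Your proof is correct, and it is precisely the ``elementary calculation'' that the paper asserts without writing out: both norms reduce to the condition $\Phi(1/\lambda)\,|D|\le 1$, and the identification of $\sup\{u:\Phi(u)\le |D|^{-1}\}$ with the generalized inverse $\Phi^{-1}(|D|^{-1})$ via monotonicity and left-continuity is exactly the point that needs care. Nothing is missing; your treatment of the weak norm and of the degenerate cases is a welcome addition to what the paper leaves implicit.
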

By Theorem \ref{HolderOr}, Lemma \ref{charorlc} and \eqref{2.3} we get the following estimate.
\begin{lemma} \label{ShVM01}
	For a Young function $\Phi$ and any $\G$-ball $B$, the following inequality is valid
	\begin{equation}\label{lemHold}
	\int_{B}|f(y)|dy \leq 2 |B| \Phi^{-1}\left(|B|^{-1}\right) \|f\|_{L^{\Phi}(B)}.
	\end{equation}
\end{lemma}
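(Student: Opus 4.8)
The plan is to reduce the claim to the Orlicz H\"older inequality of Theorem \ref{HolderOr} applied to the pair $f$ and $g=\chi_{_B}$, and then to evaluate the resulting norm of $\chi_{_B}$ using Lemma \ref{charorlc} together with the elementary inequality \eqref{2.3}.

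First I would write
\begin{equation*}
\int_{B}|f(y)|dy = \int_{\G}|f(y)\chi_{_B}(y)|dy \le 2\,\|f\|_{L^{\Phi}(B)}\,\|\chi_{_B}\|_{L^{\widetilde{\Phi}}(\G)},
\end{equation*}
where $\widetilde{\Phi}$ is the complementary Young function and the inequality is precisely \eqref{lemHoldX} with $D=B$. Next, since $|B|<\infty$, Lemma \ref{charorlc} (applied with the Young function $\widetilde{\Phi}$ and the set $D=B$) gives
\begin{equation*}
\|\chi_{_B}\|_{L^{\widetilde{\Phi}}(\G)} = \frac{1}{\widetilde{\Phi}^{-1}\left(|B|^{-1}\right)}.
\end{equation*}

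Finally, I would invoke \eqref{2.3} with $r=|B|^{-1}\ge 0$, which yields $|B|^{-1}\le \Phi^{-1}\!\left(|B|^{-1}\right)\,\widetilde{\Phi}^{-1}\!\left(|B|^{-1}\right)$; dividing by the positive quantity $\widetilde{\Phi}^{-1}\!\left(|B|^{-1}\right)$ gives
\begin{equation*}
\frac{1}{\widetilde{\Phi}^{-1}\left(|B|^{-1}\right)} \le |B|\,\Phi^{-1}\!\left(|B|^{-1}\right).
\end{equation*}
Combining the three displays produces the asserted bound $\int_{B}|f(y)|dy \le 2\,|B|\,\Phi^{-1}\!\left(|B|^{-1}\right)\,\|f\|_{L^{\Phi}(B)}$.

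There is essentially no obstacle here: every ingredient is already stated in the excerpt, and the argument is just a three-line chain of inequalities. The only minor point worth checking is that \eqref{2.3} is being used for the \emph{complementary} pair in the correct order (the left-hand inequality $r\le \Phi^{-1}(r)\widetilde{\Phi}^{-1}(r)$, not the right-hand one), and that $\widetilde{\Phi}^{-1}\!\left(|B|^{-1}\right)>0$ so that the division is legitimate; this holds because $\widetilde{\Phi}\in\mathcal{Y}$ whenever $\Phi$ is a Young function for which the statement is non-vacuous, and in any case $\widetilde{\Phi}^{-1}(s)>0$ for $s>0$.
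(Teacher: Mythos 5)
Your proof is correct and follows exactly the route the paper indicates (the lemma is stated there as an immediate consequence of Theorem \ref{HolderOr}, Lemma \ref{charorlc} and \eqref{2.3}, which are precisely the three ingredients you combine). The care you take over the direction of \eqref{2.3} and the positivity of $\widetilde{\Phi}^{-1}\left(|B|^{-1}\right)$ is appropriate and the argument is complete.
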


\

\section{Characterization of Lipschitz spaces via commutators}

$~~~$ For a given $\G$-ball $B$ and $0 \le \a < Q$, we define the following maximal function:
\begin{equation*}
	M_{\a,B}f(x)=\sup_{B\supseteq B' \ni x} |B'|^{-1+\frac{\a}{Q}}\int_{B'} |f(y)|dy,
\end{equation*}
where the supremum is taken over all balls $B'$ such that $x \in B' \subseteq B$.
Moreover, we denote by $M_{B}=M_{0,B}$ when $\a=0$.

In order to prove our main theorem, we also need the following lemma.
\begin{lemma}\label{estFrMax} $\cite{GulResMath2022}$ Let $0\le\a<Q$, and $f:\G\to \R$ be a locally integrable function.

$(1)$	If $B_0$ is a ball on $\G$, then $|B_0|^{\frac{\a}{Q}}\leq  M_{\a}\big(\chi_{B_0}\big)(x) = M_{\a,B_0}\big(\chi_{B_0}\big)(x)$ for every $x\in B_0$.

$(2)$ $M_{\a}\big(f \,\chi_{B}\big)(x) = M_{\a,B}(f)(x)$ and $M_{\a}\big(\chi_{B}\big)(x) = M_{\a,B}\big(\chi_{B}\big)(x) = |B|^{\frac{\a}{Q}}$ for every
$x \in B \subset \G$.
\end{lemma}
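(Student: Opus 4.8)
The plan is to prove Lemma \ref{estFrMax} by unwinding the definitions of $M_{\a}$, $M_{\a,B}$, and the local maximal operator $M_{\a,B_0}$, and exploiting the elementary fact that on a stratified group the Haar measure of a ball scales as $|B(x,r)| = c_1 r^Q$.

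\textbf{Part (1).} First I would observe that since $x \in B_0$ and $B_0 \ni x$ is itself an admissible ball in the supremum defining $M_{\a}(\chi_{B_0})(x)$, we may test with $B' = B_0$ and obtain
\[
M_{\a}(\chi_{B_0})(x) \ge |B_0|^{-1+\frac{\a}{Q}} \int_{B_0} \chi_{B_0}(y)\,dy = |B_0|^{-1+\frac{\a}{Q}} \cdot |B_0| = |B_0|^{\frac{\a}{Q}},
\]
which gives the claimed lower bound $|B_0|^{\frac{\a}{Q}} \le M_{\a}(\chi_{B_0})(x)$. For the equality $M_{\a}(\chi_{B_0})(x) = M_{\a,B_0}(\chi_{B_0})(x)$, the inequality $\ge$ is trivial because $M_{\a,B_0}$ takes the supremum over the smaller family of balls $B' \subseteq B_0$. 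For the reverse inequality, given any ball $B' \ni x$ in the (unrestricted) supremum defining $M_{\a}(\chi_{B_0})(x)$, I would replace it by $B' \cap B_0$ (or more carefully, note $\int_{B'} \chi_{B_0} = |B' \cap B_0|$) and compare with the contribution of a ball contained in $B_0$; the key monotonicity point is that enlarging $B'$ beyond $B_0$ only decreases the averaged quantity $|B'|^{-1+\a/Q}|B' \cap B_0|$ once $B' \supseteq B_0$, since then the numerator is fixed at $|B_0|$ while $|B'|^{-1+\a/Q}$ is decreasing in $|B'|$ (here $-1 + \a/Q < 0$). Thus the supremum is attained (or approached) using balls inside $B_0$, yielding $M_{\a}(\chi_{B_0})(x) \le M_{\a,B_0}(\chi_{B_0})(x)$, and in fact both equal $|B_0|^{\a/Q}$.

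\textbf{Part (2).} For the identity $M_{\a}(f\chi_B)(x) = M_{\a,B}(f)(x)$ with $x \in B$, I would again argue by two inequalities. The inequality $M_{\a}(f\chi_B)(x) \ge M_{\a,B}(f)(x)$ holds because for any $B' \subseteq B$ with $x \in B'$ we have $\int_{B'} |f\chi_B| = \int_{B'} |f|$, so every ball admissible for $M_{\a,B}$ contributes the same value to $M_{\a}(f\chi_B)$. For the reverse direction, take an arbitrary ball $B' \ni x$ (not necessarily inside $B$); then $\int_{B'} |f\chi_B(y)|\,dy = \int_{B' \cap B} |f(y)|\,dy \le \int_{B'' } |f(y)|\,dy$ where one wants to compare with a ball $B'' \subseteq B$. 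The cleanest route is: since $\int_{B'}|f\chi_B| = \int_{B'\cap B}|f| \le \int_{B'}|f \chi_B|$ trivially, and since $|B'|^{-1+\a/Q} \le |B'\cap B|^{-1+\a/Q}$ only when... — here I must be slightly careful, so instead I would note that $B' \cap B \subseteq B'$ gives $|B'|^{-1+\a/Q}\int_{B'}|f\chi_B| = |B'|^{-1+\a/Q}\int_{B'\cap B}|f|$ and that $B'\cap B$ need not be a ball, so the honest argument passes through the dyadic/engulfing property of balls on $\G$: there is a ball $\widetilde B \subseteq B$ containing $B' \cap B$ with $|\widetilde B| \approx |B' \cap B|$; alternatively one reduces to $B' \subseteq B$ directly by the observation that if $B' \not\subseteq B$ then shrinking $B'$ to $B' \cap B$ can only help. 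The statement of \eqref{def of $M_{\a,B}$} and the cited reference \cite{GulResMath2022} suggest the intended proof is the short one using the standard fact (valid on spaces of homogeneous type, hence on $\G$) that the restricted and unrestricted maximal functions of a function supported in $B$ agree on $B$. Finally, the special case $f = \chi_B$ combined with Part (1) applied with $B_0 = B$ gives $M_{\a}(\chi_B)(x) = M_{\a,B}(\chi_B)(x) = |B|^{\a/Q}$ for $x \in B$.

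\textbf{Main obstacle.} The only genuinely delicate point is the reverse inequality in both parts, i.e. showing that balls $B'$ that stick out of $B$ (or $B_0$) never do better in the supremum than balls contained in $B$. On $\R^n$ this is classical and elementary; on a stratified Lie group one must invoke that $(\G,\rho,dx)$ is a space of homogeneous type and that the homogeneous norm $\rho$ satisfies the quasi-triangle inequality with constant $c_0$, so that the comparison of $B' \cap B$ with a ball inside $B$ of comparable measure introduces only absolute constants — which is harmless for the \emph{inequality} $|B_0|^{\a/Q} \le M_\a(\chi_{B_0})$ but requires care if one insists on the \emph{exact} identities. I expect the author's proof (following \cite{GulResMath2022}) sidesteps this by formulating the identities only for the maximal function built from the same family of balls, so that the matching is definitional once one checks that $f\chi_B$ and $f$ have identical integrals over every $B' \subseteq B$; the genuinely new content over $\R^n$ is merely the bookkeeping with $|B(x,r)| = c_1 r^Q$.
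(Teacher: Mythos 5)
The paper does not actually prove Lemma \ref{estFrMax}: it is imported wholesale from \cite{GulResMath2022}, so there is no internal proof to measure your argument against, and it has to be judged on its own merits. Part (1) and the identities $M_{\a}(\chi_{B})=M_{\a,B}(\chi_{B})=|B|^{\a/Q}$ on $B$ are correct, and your argument for them is essentially right but incomplete as written: you only discuss test balls $B'$ with $B'\subseteq B_0$ or $B'\supseteq B_0$. The clean way to cover \emph{all} $B'\ni x$, including those that merely overlap $B_0$, is the bound $\int_{B'}\chi_{B_0}=|B'\cap B_0|\le\min\{|B'|,|B_0|\}$: if $|B'|\le|B_0|$ then $|B'|^{-1+\a/Q}|B'\cap B_0|\le|B'|^{\a/Q}\le|B_0|^{\a/Q}$, and if $|B'|>|B_0|$ then $|B'|^{-1+\a/Q}|B_0|\le|B_0|^{\a/Q}$ because $-1+\a/Q<0$. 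Together with the test ball $B'=B_0$ this gives all of part (1) and the $\chi_{B}$ statements in part (2) with genuine equalities.

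The genuine gap is exactly where you hesitate: the identity $M_{\a}(f\chi_{B})(x)=M_{\a,B}(f)(x)$ for general $f$. The engulfing step you invoke --- a ball $\widetilde B\subseteq B$ with $B'\cap B\subseteq\widetilde B$ and $|\widetilde B|\thickapprox|B'\cap B|$ --- would at best give $M_{\a}(f\chi_{B})\lesssim M_{\a,B}(f)$ with a structural constant, never the stated equality; and the exact identity really is a \emph{cube} phenomenon (the intersection of two cubes is a box that can be fattened, inside the larger cube, to a cube of the smaller side length), not a ball phenomenon. Already for Euclidean balls it fails: take $B=B(0,1)\subset\R^{2}$, $x=(0.87,0)$, and $f$ a tiny bump of mass $m$ at $p=(0.87,0.47)\in B$. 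The smallest ball containing $x$ and $p$ has radius $|x-p|/2=0.235$ but protrudes from $B$, so $M(f\chi_{B})(x)\ge m/(\pi\,0.235^{2})$, whereas every ball contained in $B$ and containing both $x$ and $p$ is forced (by the constraint $|c|+\rho\le1$ on its center and radius) to have radius at least about $0.36$, so $M_{B}(f)(x)\le m/(\pi\,0.36^{2})$; the two suprema differ by a factor greater than $2$. The same obstruction is present for $\rho$-balls on $\G$. So your route cannot close this step, and indeed no route can: what is true, and what every later use of the lemma in this paper actually requires (all subsequent estimates are stated only up to constants), is the two-sided equivalence $M_{\a,B}(f)(x)\le M_{\a}(f\chi_{B})(x)\le C\,M_{\a,B}(f)(x)$ for $x\in B$, with $C$ depending on the doubling constant of $(\G,\rho,dx)$. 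You should prove that equivalence (the left inequality is the trivial one you already have; the right one needs the quasi-metric engulfing argument you allude to) or restate the first claim of part (2) with $\thickapprox$ in place of $=$.
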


The following result completely characterizes the boundedness of $M_{\a}$ on Orlicz spaces.
\begin{theorem}\label{AdamsFrMaxCharOrl}   $\cite{GulResMath2022}$
	Let $0< \a<Q$, $\Phi, \Psi$ be Young functions and $\Phi\in\mathcal{Y}$. The condition
	\begin{equation}\label{adRieszCharOrl2}
	r^{\a} \Phi^{-1}\big(r^{-Q}\big) \le C \Psi^{-1}\big(r^{-Q}\big)
	\end{equation}
	for all $r>0$, where $C>0$ does not depend on $r$, is necessary and sufficient for the boundedness of $M_{\a}$ from $L^{\Phi}(\G)$ to $WL^{\Psi}(\G)$. Moreover, if $\Phi\in\nabla_2,$ the condition \eqref{adRieszCharOrl2} is necessary and sufficient for the boundedness of $M_{\a}$ from $L^{\Phi}(\G)$ to $L^{\Psi}(\G)$.
\end{theorem}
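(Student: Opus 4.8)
The assertion packages two implications, and the plan is to prove necessity of \eqref{adRieszCharOrl2} by testing on indicators of balls, and sufficiency by first establishing a pointwise Hedberg-type inequality and then reading off the weak and strong norm bounds from it together with the classical mapping properties of the Hardy--Littlewood maximal operator on Orlicz spaces.

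\emph{Necessity.} Take $f=\chi_{B_0}$ for an arbitrary $\G$-ball $B_0$. By Lemma~\ref{estFrMax}(2), $M_{\a}(\chi_{B_0})(x)=|B_0|^{\a/Q}$ for every $x\in B_0$, so $M_{\a}(\chi_{B_0})\ge|B_0|^{\a/Q}\chi_{B_0}$; combined with Lemma~\ref{charorlc}, the assumed bound $\|M_{\a}f\|_{WL^{\Psi}}\le C\|f\|_{L^{\Phi}}$ gives
\[
\frac{|B_0|^{\a/Q}}{\Psi^{-1}(|B_0|^{-1})}=\big\||B_0|^{\a/Q}\chi_{B_0}\big\|_{WL^{\Psi}}\le\|M_{\a}(\chi_{B_0})\|_{WL^{\Psi}}\le\frac{C}{\Phi^{-1}(|B_0|^{-1})}.
\]
Since $|B_0|=c_1 r^{Q}$ with $r>0$ arbitrary, rewriting $|B_0|^{\a/Q}$ in terms of $r$ and absorbing $c_1$ (using $\Psi^{-1}(\l s)\le\max\{1,\l\}\,\Psi^{-1}(s)$) yields exactly \eqref{adRieszCharOrl2}. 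As $\|\cdot\|_{WL^{\Psi}}\le\|\cdot\|_{L^{\Psi}}$, boundedness of $M_{\a}$ into $L^{\Psi}(\G)$ also forces \eqref{adRieszCharOrl2}, so necessity in the second assertion is already contained in the first.

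\emph{Sufficiency: a Hedberg-type inequality.} Assume \eqref{adRieszCharOrl2}. The heart of the matter is to show that, with $M=M_0$ the Hardy--Littlewood maximal operator,
\[
M_{\a}f(x)\le C_1\Big(\rho^{\a}Mf(x)+\Psi^{-1}(\rho^{-Q})\,\|f\|_{L^{\Phi}(\G)}\Big)\qquad\text{for all }x\in\G,\ \rho>0.
\]
To see this, split the supremum defining $M_{\a}f(x)$ over balls $B(x,\sigma)\ni x$ at $\sigma=\rho$. For $\sigma\le\rho$, $|B|^{-1+\a/Q}\int_{B}|f|=|B|^{\a/Q}|B|^{-1}\int_{B}|f|\le(c_1\sigma^{Q})^{\a/Q}Mf(x)\le c_1^{\a/Q}\rho^{\a}Mf(x)$. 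For $\sigma>\rho$, Lemma~\ref{ShVM01} gives $|B|^{-1+\a/Q}\int_{B}|f|\le 2|B|^{\a/Q}\Phi^{-1}(|B|^{-1})\|f\|_{L^{\Phi}}$; writing $|B|=(c_1^{1/Q}\sigma)^{Q}$ and applying \eqref{adRieszCharOrl2} with $r=c_1^{1/Q}\sigma$ bounds $|B|^{\a/Q}\Phi^{-1}(|B|^{-1})$ by $C\Psi^{-1}((c_1\sigma^{Q})^{-1})\le C\Psi^{-1}((c_1\rho^{Q})^{-1})\le C'\Psi^{-1}(\rho^{-Q})$, using that $\Psi^{-1}$ is nondecreasing, $\sigma>\rho$, and again the scaling inequality for $\Psi^{-1}$. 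Taking the supremum proves the display.

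\emph{Sufficiency: from the pointwise bound to the norms.} It is classical that $M$ maps $L^{\Phi}(\G)$ boundedly into $WL^{\Phi}(\G)$ for every Young function $\Phi$, and into $L^{\Phi}(\G)$ when $\Phi\in\nabla_2$; write $c_M$ for the norm in the relevant case. Assume $f\not\equiv0$. For $x$ with $Mf(x)<\infty$ set $\rho=\rho(x)$ by $\rho^{-Q}=\Phi\big(Mf(x)/(c_M\|f\|_{L^{\Phi}})\big)$, legitimate because $\Phi\in\mathcal{Y}$ makes $\Phi^{-1}$ a true inverse; then $\rho^{\a}\Phi^{-1}(\rho^{-Q})=\rho^{\a}Mf(x)/(c_M\|f\|_{L^{\Phi}})$, so by \eqref{adRieszCharOrl2} the first term in the Hedberg bound is dominated by the second and the bound collapses to
\[
M_{\a}f(x)\le C_2\,\|f\|_{L^{\Phi}}\,\Psi^{-1}\!\Big(\Phi\big(Mf(x)/(c_M\|f\|_{L^{\Phi}})\big)\Big)\qquad\text{a.e.}
\]
For the strong estimate ($\Phi\in\nabla_2$): apply $\Psi$, use $\Psi(\Psi^{-1}(v))\le v$ to get $\Psi\big(M_{\a}f(x)/(C_2\|f\|_{L^{\Phi}})\big)\le\Phi\big(Mf(x)/(c_M\|f\|_{L^{\Phi}})\big)$, integrate over $\G$, and invoke $\int_{\G}\Phi(Mf/(c_M\|f\|_{L^{\Phi}}))\le1$ (a consequence of $\|Mf\|_{L^{\Phi}}\le c_M\|f\|_{L^{\Phi}}$) to conclude $\int_{\G}\Psi(M_{\a}f/(C_2\|f\|_{L^{\Phi}}))\le1$, i.e. $\|M_{\a}f\|_{L^{\Psi}}\le C_2\|f\|_{L^{\Phi}}$. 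For the weak estimate (no extra hypothesis), the same display with $c_M$ the weak norm shows that $M_{\a}f(x)>t$ forces $\Phi\big(Mf(x)/(c_M\|f\|_{L^{\Phi}})\big)\ge\Psi\big(t/(C_2\|f\|_{L^{\Phi}})\big)$, hence $Mf(x)\ge c_M\|f\|_{L^{\Phi}}\Phi^{-1}\big(\Psi(t/(C_2\|f\|_{L^{\Phi}}))\big)$; plugging this level into the weak $(\Phi,\Phi)$ bound for $M$ gives $m(M_{\a}f,t)\le\big[\Phi\big(\Phi^{-1}(\Psi(t/(C_2\|f\|_{L^{\Phi}})))\big)\big]^{-1}=\big[\Psi(t/(C_2\|f\|_{L^{\Phi}}))\big]^{-1}$, that is $\|M_{\a}f\|_{WL^{\Psi}}\le C_2\|f\|_{L^{\Phi}}$.

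\emph{Main obstacle.} The conceptual steps are short; the real labor is propagating the generalized inverses $\Phi^{-1},\Psi^{-1}$ and the geometric constant $c_1$ in $|B(x,r)|=c_1 r^{Q}$ correctly through every inequality, leaning on \eqref{2.3}, the elementary convexity and scaling inequalities for Young functions and their inverses, and a separate (essentially vacuous) treatment of the degenerate ranges on which $\Psi$ vanishes or equals $\infty$. One must also import the weak- and strong-type boundedness of $M$ on $L^{\Phi}(\G)$, which on the stratified group rests on the doubling relation $|B(x,r)|=c_1 r^{Q}$ and a Vitali/Wiener covering argument. Apart from this, the proof is a transcription of the Orlicz-space Adams theorem from $\Rn$ to $\G$, with the group entering only through its metric--measure structure.
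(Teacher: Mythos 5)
Your proof is correct and follows essentially the same route as the standard one in the source from which the paper imports this theorem (the paper itself states Theorem \ref{AdamsFrMaxCharOrl} with a citation to \cite{GulResMath2022} and gives no proof): necessity by testing on $\chi_{B_0}$ via Lemma \ref{estFrMax} and Lemma \ref{charorlc}, and sufficiency via the Hedberg-type pointwise estimate optimized at $\rho^{-Q}=\Phi\big(Mf(x)/(c_M\|f\|_{L^{\Phi}})\big)$ together with the weak-type $(\Phi,\Phi)$ bound for $M$ (and the strong bound under $\Phi\in\nabla_2$). The bookkeeping points you flag (the constant $c_1$, the generalized inverses, the degenerate ranges of $\Psi$) are handled exactly as you indicate and pose no obstruction.
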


\begin{remark}
	Note that Theorem \ref{AdamsFrMaxCharOrl} in the case $\G = \Rn$ were proved in \cite{GulDerHasAMP}.
\end{remark}

In this section, as an application of Theorem \ref{AdamsFrMaxCharOrl} we consider the boundedness of $M_{b,\a}$ on Orlicz spaces when $b$ belongs to the Lipschitz space, by which some new characterizations of the Lipschitz spaces are given.

Next we give the definition of the Lipschitz spaces on $\G$, and state some basic properties and useful lemmas.
\begin{definition}\label{deflip}    (Lipschitz-type spaces on $\G$) Let $0 < \beta < 1$.\\
$(1)$ We say a function $b$ belongs to the Lipschitz space $\dot{\Lambda}_{\beta}(\G)$ if there exists a constant $C$ such that for all $x, y \in \G$,
$$
|b(x)-b(y)|\le C \rho(y^{-1}x)^{\beta}.
$$
The smallest such constant $C$ is called the $\dot{\Lambda}_{\beta}(\G)$ norm of $b$ and is denoted by $\|b\|_{\dot{\Lambda}_{\beta}(\G)}$.

$(2)$ $\cite{Krantz1982}$  The space ${\rm Lip}_{\beta}(\G)$ is defined to be the set of all locally integrable functions $b$, i.e., there exists a positive constant $C$, such that
$$
\sup_{B} \frac{1}{|B|^{1+\beta/Q}} \int_{B}|b(x)-b_{B}|dx \le C,
$$
where the supremum is taken over every ball $B \subset \G$ containing $x$ and $b_{B}=\frac{1}{|B|}\int_{B} b(y)dy$. The smallest such constant $C$ is called the ${\rm Lip}_{\beta}(\G)$ norm of $b$ and is denoted by $\|b\|_{{\rm Lip}_{\beta}(\G)}$.
\end{definition}

To prove the theorems, we need auxiliary results. The first one is the following characterizations
of Lipschitz space (see \cite{Krantz1982}).
\begin{lemma}\label{CharLipSp}
Let $0 < \beta < 1$ and $b\in L^{1}_{\rm loc}(\G)$, then

$(1)$
$$
\|b\|_{\dot{\Lambda}_{\beta}(\G)} \thickapprox \|b\|_{{\rm Lip}_{\beta}(\G)}.
$$

$(2)$ Let $B_1 \subset B_2 \subset \G$ and $b \in {\rm Lip}_{\beta}(\G)$, where $B_1$ and $B_2$ are balls. Then there exists a constant $C$ depends only on $B_1$ and $B_2$ such that
$$
|b_{B_1}-b_{B_2}| \le C \, \|b\|_{{\rm Lip}_{\beta}(\G)} \, |B_2|^{\frac{\beta}{Q}}.
$$

$(3)$ There exists a constant $C$ depends only on $\beta$ such that
$$
|b(x) - b(y)| \le C \, \|b\|_{{\rm Lip}_{\beta}(\G)} \, |B_2|^{\frac{\beta}{Q}}
$$
holds for any ball $B$ containing $x$ and $y$.
\end{lemma}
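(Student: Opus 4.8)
The plan is to prove the three assertions in the order (2), then the inclusion $\dot\Lambda_\beta(\G)\subseteq{\rm Lip}_\beta(\G)$ in (1), and finally the reverse inclusion in (1) together with (3), since these last two share the same mechanism.

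\textbf{Part (2).} This is a one-line computation. For nested balls $B_1\subseteq B_2$,
$$
|b_{B_1}-b_{B_2}|=\left|\frac{1}{|B_1|}\int_{B_1}\big(b(y)-b_{B_2}\big)\,dy\right|\le\frac{1}{|B_1|}\int_{B_2}|b(y)-b_{B_2}|\,dy=\frac{|B_2|}{|B_1|}\cdot\frac{1}{|B_2|}\int_{B_2}|b(y)-b_{B_2}|\,dy,
$$
and the last average is at most $\|b\|_{{\rm Lip}_\beta(\G)}|B_2|^{\beta/Q}$ by Definition \ref{deflip}(2). The constant $|B_2|/|B_1|$ will in every application be a fixed dilation ratio to the power $Q$, hence harmless.

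\textbf{The inclusion $\dot\Lambda_\beta(\G)\subseteq{\rm Lip}_\beta(\G)$.} Fix a ball $B=B(x_0,r)$. For $x,y\in B$ the quasi-triangle inequality for $\rho$ gives $\rho(y^{-1}x)\le c_0\big(\rho(x_0^{-1}x)+\rho(x_0^{-1}y)\big)<2c_0r$, so if $b\in\dot\Lambda_\beta(\G)$ then $|b(x)-b(y)|\le\|b\|_{\dot\Lambda_\beta(\G)}(2c_0r)^\beta$. Averaging in $y\in B$ gives $|b(x)-b_B|\le\|b\|_{\dot\Lambda_\beta(\G)}(2c_0r)^\beta$, and since $|B|=c_1r^Q$ we obtain $\frac{1}{|B|^{1+\beta/Q}}\int_B|b(x)-b_B|\,dx\lesssim\|b\|_{\dot\Lambda_\beta(\G)}$; taking the supremum over $B$ yields $\|b\|_{{\rm Lip}_\beta(\G)}\lesssim\|b\|_{\dot\Lambda_\beta(\G)}$.

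\textbf{The inclusion ${\rm Lip}_\beta(\G)\subseteq\dot\Lambda_\beta(\G)$ and Part (3).} Here is the only real work. Let $b\in{\rm Lip}_\beta(\G)$. Given a ball $B=B(z,R)$, set $B_k=B(z,2^{-k}R)$ for $k\ge0$, so $B_{k+1}\subset B_k$ with volume ratio $2^Q$; Part (2) gives $|b_{B_k}-b_{B_{k+1}}|\lesssim\|b\|_{{\rm Lip}_\beta(\G)}(2^{-k}R)^\beta$. Since $\beta>0$, the geometric series $\sum_k2^{-k\beta}$ converges, so $\{b_{B_k}\}_k$ is Cauchy and
$$
|b_{B_0}-b_{B_m}|\le\sum_{k=0}^{m-1}|b_{B_k}-b_{B_{k+1}}|\lesssim\|b\|_{{\rm Lip}_\beta(\G)}R^\beta\approx\|b\|_{{\rm Lip}_\beta(\G)}|B|^{\beta/Q}
$$
for every $m$. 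Because $\G$ with the homogeneous quasi-metric and Haar measure is a doubling metric measure space, the Lebesgue differentiation theorem applies: for a.e.\ $z$ we have $b(z)=\lim_{R\to0}b_{B(z,R)}=\lim_kb_{B_k}$, and I fix this precise representative of $b$. Letting $m\to\infty$ yields $|b(z)-b_{B(z,R)}|\lesssim\|b\|_{{\rm Lip}_\beta(\G)}|B(z,R)|^{\beta/Q}$ for a.e.\ $z$. Now let $x,y$ belong to an arbitrary ball $B$ of radius $R$; enlarge $B$ to $\widetilde B=B(x,2c_0R)$, so that $B\subseteq\widetilde B$ and $|\widetilde B|\approx|B|$. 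Applying Part (2) to $B\subseteq\widetilde B$ and the previous estimate to $\widetilde B$,
$$
|b(x)-b_B|\le|b(x)-b_{\widetilde B}|+|b_{\widetilde B}-b_B|\lesssim\|b\|_{{\rm Lip}_\beta(\G)}|B|^{\beta/Q},
$$
hence $|b(x)-b(y)|\le|b(x)-b_B|+|b_B-b(y)|\lesssim\|b\|_{{\rm Lip}_\beta(\G)}|B|^{\beta/Q}$, which is (3). Finally, for any $x\ne y$, apply (3) with $B=B(x,2\rho(y^{-1}x))$, which contains both points and satisfies $|B|^{\beta/Q}\approx\rho(y^{-1}x)^\beta$, to get $|b(x)-b(y)|\lesssim\|b\|_{{\rm Lip}_\beta(\G)}\rho(y^{-1}x)^\beta$, i.e.\ $\|b\|_{\dot\Lambda_\beta(\G)}\lesssim\|b\|_{{\rm Lip}_\beta(\G)}$.

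\textbf{Main obstacle.} The delicate point is the passage from a mean-oscillation bound to a genuine pointwise bound: one must invoke the Lebesgue differentiation theorem on $\G$ and agree to identify $b$ with its canonical representative $z\mapsto\lim_{R\to0}b_{B(z,R)}$, so that the almost-everywhere estimates become everywhere estimates. Everything else reduces to the quasi-triangle inequality for $\rho$, the exact scaling $|B(z,R)|=c_1R^Q$, Definition \ref{deflip}, and the summation of a geometric series, with all implied constants depending only on $\G$ and $\beta$.
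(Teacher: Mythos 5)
Your proof is correct. Note, however, that the paper itself offers no proof of this lemma: it is quoted directly from Krantz's work on Lipschitz spaces on stratified groups, so there is no in-paper argument to compare against. What you have written is the standard self-contained Campanato-type argument, and all the steps check out: the averaging computation for (2) (with the constant $|B_2|/|B_1|$, which is exactly the dependence on $B_1,B_2$ that the statement allows and which is a fixed power of the dilation ratio in every application); the quasi-triangle inequality $\rho(y^{-1}x)\le c_0(\rho(y^{-1}x_0)+\rho(x^{-1}x_0))<2c_0r$ for the easy inclusion $\dot\Lambda_\beta(\G)\subseteq{\rm Lip}_\beta(\G)$; and the telescoping dyadic chain $B_k=B(z,2^{-k}R)$ plus the convergent geometric series $\sum_k 2^{-k\beta}$ for the converse. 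You correctly isolate the one genuinely delicate point, namely that the mean-oscillation bound only yields a pointwise bound after invoking the Lebesgue differentiation theorem (valid here because $(\G,\rho,dx)$ is doubling, $|B(x,r)|=c_1r^Q$) and passing to the canonical representative $z\mapsto\lim_{R\to0}b_{B(z,R)}$; this is exactly the caveat under which part (3) and the inequality $\|b\|_{\dot\Lambda_\beta(\G)}\lesssim\|b\|_{{\rm Lip}_\beta(\G)}$ must be read. Your argument also silently corrects the typo in the statement of (3), where $|B_2|^{\beta/Q}$ should read $|B|^{\beta/Q}$ for the ball $B$ containing $x$ and $y$.
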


\begin{lemma}\label{pwslip}
Let $0<\beta<1$ and $b\in \dot{\Lambda}_{\beta}(\G)$. Then the following pointwise estimate holds
$$
M_{b}f(x) \leq C \|b\|_{\dot{\Lambda}_{\beta}(\G)} \, M_{\b}f(x).
$$
\end{lemma}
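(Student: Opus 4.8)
The plan is to reduce the claim to a pointwise comparison of integrands over a fixed ball, after which the asserted inequality is essentially a rewriting of the two definitions involved.

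First I would fix an arbitrary $\G$-ball $B=B(z,r)$ containing $x$ and bound $|b(x)-b(y)|$ uniformly for $y\in B$. Since $x,y\in B$, the quasi-triangle inequality $\rho(xy)\le c_{0}\big(\rho(x)+\rho(y)\big)$ together with $\rho(g^{-1})=\rho(g)$ gives $\rho(y^{-1}x)\le c_{0}\big(\rho(z^{-1}y)+\rho(z^{-1}x)\big)<2c_{0}r$; combined with $|B|=c_{1}r^{Q}$ this yields $\rho(y^{-1}x)^{\beta}\le C\,|B|^{\beta/Q}$ with $C$ depending only on $\G$ and $\beta$. Hence, by the definition of $\dot{\Lambda}_{\beta}(\G)$,
\[
|b(x)-b(y)|\le \|b\|_{\dot{\Lambda}_{\beta}(\G)}\,\rho(y^{-1}x)^{\beta}\le C\,\|b\|_{\dot{\Lambda}_{\beta}(\G)}\,|B|^{\beta/Q},\qquad y\in B.
\]
(Equivalently, this estimate is immediate from parts $(1)$ and $(3)$ of Lemma \ref{CharLipSp}.) Next I would insert this bound into the definition of $M_{b}f(x)$: for the fixed ball $B\ni x$,
\[
|B|^{-1}\int_{B}|b(x)-b(y)||f(y)|\,dy\le C\,\|b\|_{\dot{\Lambda}_{\beta}(\G)}\,|B|^{-1+\beta/Q}\int_{B}|f(y)|\,dy\le C\,\|b\|_{\dot{\Lambda}_{\beta}(\G)}\,M_{\beta}f(x),
\]
where the last inequality merely uses $B\ni x$ and the definition of $M_{\beta}$. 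Taking the supremum over all balls $B\ni x$ on the left-hand side gives $M_{b}f(x)\le C\,\|b\|_{\dot{\Lambda}_{\beta}(\G)}\,M_{\beta}f(x)$.

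There is no genuine obstacle in this argument; the only point requiring a little care is keeping the comparison constants ($c_{0}$, $c_{1}$, and the exponent $\beta$) explicit when passing from the radius $r$ to the Haar measure $|B|\approx r^{Q}$, so that the final constant $C$ depends only on $\G$ and $\beta$ and is independent of $f$, $x$, and $B$.
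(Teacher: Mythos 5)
Your proposal is correct and follows the same route as the paper: bound $|b(x)-b(y)|$ by $C\,\|b\|_{\dot{\Lambda}_{\beta}(\G)}\,|B|^{\beta/Q}$ for $x,y$ in a common ball $B$ and then absorb the factor $|B|^{\beta/Q}$ into the definition of $M_{\beta}$. The paper's proof is just a terser version of this; your explicit tracking of the quasi-triangle constant $c_{0}$ and the volume identity $|B|=c_{1}r^{Q}$ supplies exactly the detail the paper leaves implicit.
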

\begin{proof}
If $b\in \dot{\Lambda}_{\beta}(\G)$, then
\begin{align*}
M_{b}(f)(x) &=\sup\limits_{B\ni x}|B|^{-1}\int _{B} |b(x)-b(y)||f(y)|dy
\\
&\leq C \|b\|_{\dot{\Lambda}_{\beta}(\G)} \sup\limits_{B\ni x}|B|^{-1+\frac{\b}{Q}}
\int _{B} |f(y)|dy
\\
& = C\|b\|_{\dot{\Lambda}_{\beta}(\G)} \, M_{\b}f(x).
\end{align*}
\end{proof}

\begin{lemma}\label{estFrMaxCom}
	If $b\in  L^1_{\rm loc}(\G)$ and $B_0:=B(x_0,r_0)$, then
	$$
	|b(x)-b_{B_0}|\leq M_{b} \chi_{B_0}(x) ~~ \mbox{for every} ~~ x\in B_0.
	$$
\end{lemma}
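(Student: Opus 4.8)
The plan is to exhibit, for a fixed $x \in B_0$, a single admissible ball in the supremum defining $M_b\chi_{B_0}(x)$ whose associated average already dominates $|b(x) - b_{B_0}|$; the natural candidate is $B_0$ itself. Since $x \in B_0$, the ball $B_0$ is one of the balls $B \ni x$ over which the supremum defining $M_b\chi_{B_0}(x)$ is taken, so
$$
M_b\chi_{B_0}(x) \;\ge\; |B_0|^{-1}\int_{B_0} |b(x) - b(y)|\,|\chi_{B_0}(y)|\,dy \;=\; |B_0|^{-1}\int_{B_0} |b(x) - b(y)|\,dy .
$$
It then remains to bound $|b(x) - b_{B_0}|$ by this last quantity. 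This is just the observation that $b_{B_0} = |B_0|^{-1}\int_{B_0} b(y)\,dy$, hence
$$
|b(x) - b_{B_0}| \;=\; \Bigl| |B_0|^{-1}\int_{B_0} \bigl(b(x) - b(y)\bigr)\,dy \Bigr| \;\le\; |B_0|^{-1}\int_{B_0} |b(x) - b(y)|\,dy ,
$$
where the middle equality uses that $|B_0|^{-1}\int_{B_0} b(x)\,dy = b(x)$ (the integrand is constant in $y$) and the inequality is the triangle inequality for integrals. Chaining the two displays gives $|b(x) - b_{B_0}| \le M_b\chi_{B_0}(x)$ for every $x \in B_0$, which is the claim.

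I do not expect any genuine obstacle here: the only points requiring a word of care are that $b \in L^1_{\rm loc}(\G)$ guarantees $b_{B_0}$ is well defined and the integrals above are finite, and that $\chi_{B_0}(y) = 1$ on the domain of integration so it may be dropped. No stratified-group structure beyond the existence of Haar measure and balls of finite measure is used, and the argument is identical to the Euclidean case.
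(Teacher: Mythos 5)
Your proof is correct and follows essentially the same route as the paper: choose $B_0$ itself as the competitor ball in the supremum defining $M_b\chi_{B_0}(x)$, then apply the triangle inequality for integrals to pass from $|b(x)-b_{B_0}|$ to the average of $|b(x)-b(y)|$ over $B_0$. No further comment is needed.
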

\begin{proof}
For $x\in B_0$, we get
\begin{align*}
& M_{b} \chi_{B_0}(x)=\sup\limits_{B\ni x}|B|^{-1}\int _{B} |b(x)-b(y)|\chi_{B_0}(y)dy
\\
&=\sup\limits_{B\ni x}|B|^{-1}\int _{B\cap B_0} |b(x)-b(y)|dy
\geq |B_0|^{-1}\int _{B_0\cap B_0} |b(x)-b(y)|dy
\\
&\geq \big| |B_0|^{-1}\int _{B_0} (b(x)-b(y))dy\big| \, |b(x)-b_{B_0}|.
\end{align*}
\end{proof}

The following theorem is valid.
\begin{theorem} \label{CommMaxCharOr017}
	Let $0<\beta<1$, $b\in  L^1_{\rm loc}(\G)$, $\Phi, \Psi$ be Young functions and $\Phi\in\mathcal{Y}$.
	
	$1.~$ If $\Phi\in \nabla_2$ and the condition
	\begin{equation}\label{adFrCharOrl1M}
		t^{-\frac{\b}{Q}}\Phi^{-1}(t)\le C \, \Psi^{-1}(t),
	\end{equation}
	holds for all $t>0$, where $C>0$ does not depend on $t$, then the condition $b\in \dot{\Lambda}_{\beta}(\G)$
	is sufficient for the boundedness of $M_{b}$ from $L^{\Phi}(\G)$ to $L^{\Psi}(\G)$.
	
	$2.~$ If the condition
	\begin{equation}\label{adFrCharOrl1Mncs}
		\Psi^{-1}(t) \leq C \, \Phi^{-1}(t)t^{-\frac{\b}{Q}},
	\end{equation}
	holds for all $t>0$, where $C>0$ does not depend on $t$, then the condition $b\in \dot{\Lambda}_{\beta}(\G)$
	is necessary for the boundedness of $M_{b}$ from $L^{\Phi}(\G)$ to $L^{\Psi}(\G)$.
	
	$3.~$ If $\Phi\in \nabla_2$ and $\Psi^{-1}(t) \thickapprox \Phi^{-1}(t)t^{-\frac{\b}{Q}}$,
	then the condition $b\in \dot{\Lambda}_{\beta}(\G)$ is necessary and sufficient for the boundedness of $M_{b}$ from $L^{\Phi}(\G)$ to $L^{\Psi}(\G)$.
\end{theorem}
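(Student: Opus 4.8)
The plan is to dispatch the three parts in order, with parts~1 and~3 being short reductions and part~2 carrying the genuine content.

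\emph{Part 1 (sufficiency).} Assume $b\in\dot{\Lambda}_{\beta}(\G)$. By Lemma~\ref{pwslip} we have the pointwise domination $M_{b}f(x)\le C\|b\|_{\dot{\Lambda}_{\beta}(\G)}M_{\b}f(x)$, so it suffices to show that $M_{\b}$ is bounded from $L^{\Phi}(\G)$ to $L^{\Psi}(\G)$. This is exactly the strong-type statement of Theorem~\ref{AdamsFrMaxCharOrl} with $\a=\b$: since $\Phi\in\nabla_2$, boundedness holds precisely when $r^{\b}\Phi^{-1}(r^{-Q})\le C\Psi^{-1}(r^{-Q})$ for all $r>0$. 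Substituting $t=r^{-Q}$ (equivalently $r=t^{-1/Q}$) transforms this into \eqref{adFrCharOrl1M}, which is the hypothesis. Hence $\|M_{b}f\|_{L^{\Psi}(\G)}\lesssim\|b\|_{\dot{\Lambda}_{\beta}(\G)}\|f\|_{L^{\Phi}(\G)}$.

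\emph{Part 2 (necessity).} Assume $M_{b}$ is bounded from $L^{\Phi}(\G)$ to $L^{\Psi}(\G)$. By Lemma~\ref{CharLipSp}(1) it is enough to bound $|B_0|^{-1-\b/Q}\int_{B_0}|b(x)-b_{B_0}|\,dx$ uniformly over all $\G$-balls $B_0=B(x_0,r_0)$. First, $\chi_{B_0}\in L^{\Phi}(\G)$ (because $|B_0|<\infty$), so $M_{b}\chi_{B_0}\in L^{\Psi}(\G)$ and the Hölder estimate below is legitimate. Lemma~\ref{estFrMaxCom} gives $|b(x)-b_{B_0}|\le M_{b}\chi_{B_0}(x)$ for $x\in B_0$; therefore, applying Theorem~\ref{HolderOr} with the complementary pair $(\Psi,\widetilde{\Psi})$, then Lemma~\ref{charorlc}, the boundedness hypothesis, and finally the inequality $\widetilde{\Psi}^{-1}(s)\ge s/\Psi^{-1}(s)$ from \eqref{2.3} with $s=|B_0|^{-1}$, we obtain
\[
\int_{B_0}|b(x)-b_{B_0}|\,dx
\le\int_{\G}M_{b}\chi_{B_0}(x)\,\chi_{B_0}(x)\,dx
\le\frac{2C}{\Phi^{-1}(|B_0|^{-1})\,\widetilde{\Psi}^{-1}(|B_0|^{-1})}
\le\frac{2C\,|B_0|\,\Psi^{-1}(|B_0|^{-1})}{\Phi^{-1}(|B_0|^{-1})}.
\]
Now invoke the hypothesis \eqref{adFrCharOrl1Mncs} with $t=|B_0|^{-1}$, which gives $\Psi^{-1}(|B_0|^{-1})\le C\,\Phi^{-1}(|B_0|^{-1})\,|B_0|^{\b/Q}$, so the right-hand side is $\lesssim|B_0|^{1+\b/Q}$. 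Dividing by $|B_0|^{1+\b/Q}$ and taking the supremum over $B_0$ yields $b\in{\rm Lip}_{\beta}(\G)=\dot{\Lambda}_{\beta}(\G)$.

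\emph{Part 3.} If $\Psi^{-1}(t)\thickapprox\Phi^{-1}(t)t^{-\b/Q}$, then both \eqref{adFrCharOrl1M} and \eqref{adFrCharOrl1Mncs} hold, and, with $\Phi\in\nabla_2$, parts~1 and~2 combine to give the claimed equivalence.

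\emph{Expected obstacle.} The sufficiency is essentially immediate once Lemma~\ref{pwslip} and Theorem~\ref{AdamsFrMaxCharOrl} are available; the only care needed there is the normalisation $t\leftrightarrow r^{-Q}$ relating \eqref{adRieszCharOrl2} to \eqref{adFrCharOrl1M}. The real work is in the necessity direction, and even there each step is standard—Hölder in Orlicz spaces, the norm of an indicator via Lemma~\ref{charorlc}, and \eqref{2.3}—so the main points to watch are the legitimacy of the Hölder estimate (supplied by the boundedness hypothesis, as noted) and the correct direction of the inequality in \eqref{2.3} when passing from $\widetilde{\Psi}^{-1}$ to $\Psi^{-1}$.
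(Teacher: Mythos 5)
Your proof is correct and follows essentially the same route as the paper: Lemma \ref{pwslip} plus Theorem \ref{AdamsFrMaxCharOrl} for sufficiency, and testing $M_b$ on $\chi_{B_0}$ together with the pointwise bound $|b-b_{B_0}|\le M_b\chi_{B_0}$ for necessity. The only cosmetic difference is that you unpack H\"older, Lemma \ref{charorlc} and \eqref{2.3} by hand where the paper simply invokes Lemma \ref{ShVM01}, which packages exactly those three steps.
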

\begin{proof}
	(1) The first statement of the theorem follows from Theorem \ref{AdamsFrMaxCharOrl} and Lemma \ref{pwslip}.
	
	(2) We shall now prove the second part. Suppose that $\Psi^{-1}(t) \lesssim \Phi^{-1}(t)t^{-\frac{\b}{Q}}$
	and $M_{b,\a}$ is bounded from $L^{\Phi}(\G)$ to $L^{\Psi}(\G)$. Choose any ball $B$ in $\G$, by Lemmas \ref{charorlc} and \ref{ShVM01}
	\begin{align*}
		\frac{1}{|B|^{1+\frac{\b}{Q}}} \int_{B}|b(y)-b_{B}|dy & = \frac{1}{|B|^{1+\frac{\b}{Q}}}
		\int_{B} \Big| \int_{B} (b(y)-b(z))dz \Big| dy \notag
		\\
		& \le \frac{1}{|B|^{1+\frac{\b}{Q}}} \int_{B} M_{b}\big( \chi_{_B}\big)(y) dy \notag
		\\
		& \le \frac{2\Psi^{-1}(|B|^{-1})}{|B|^{\frac{\b}{Q}}} \, \|M_{b}\big( \chi_{_B}\big)\|_{L^{\Psi}(B)} \notag
		\\
		& \le \frac{C}{|B|^{\frac{\b}{Q}}} \, \frac{\Psi^{-1}(|B|^{-1})}{\Phi^{-1}(|B|^{-1})} \leq C.
	\end{align*}
	Thus by Lemma \ref{CharLipSp} we get $b\in \dot{\Lambda}_{\beta}(\G)$.
	
	(3) The third statement of the theorem follows from the first and second parts of the theorem.
\end{proof}

If we take $\Phi(t)=t^{p}$ and $\Psi(t)=t^{q}$ with $1\le p<\i$ and $1\le q \le\i$ at Theorem \ref{CommMaxCharOr017}, we have the following result.
\begin{corollary}\label{ZhLpCo}
	Let $0<\beta<1$, $b\in  L^1_{\rm loc}(\G)$, $1< p<q\le \i$ and $\frac{1}{p}-\frac{1}{q}=\frac{\b}{Q}$. Then the condition $b\in \dot{\Lambda}_{\beta}(\G)$ is necessary and sufficient for the boundedness of $M_{b}$ from $L^{p}(\G)$ to $L^{q}(\G)$.
\end{corollary}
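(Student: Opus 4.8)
The plan is to specialize part~3 of Theorem~\ref{CommMaxCharOr017} to power functions. Put $\Phi(t)=t^{p}$, and take $\Psi(t)=t^{q}$ if $q<\i$, while $\Psi(t)=0$ for $0\le t\le 1$ and $\Psi(t)=\i$ for $t>1$ if $q=\i$. By the remarks following the definition of the Orlicz space, these choices give $L^{\Phi}(\G)=L^{p}(\G)$ and $L^{\Psi}(\G)=L^{q}(\G)$, so the assertion of the corollary is exactly the statement of Theorem~\ref{CommMaxCharOr017}(3) for this pair $(\Phi,\Psi)$, provided the hypotheses of that part are verified.

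First I would check the structural assumptions on $\Phi$. Since $1<p<\i$ we have $0<\Phi(t)<\i$ for $0<t<\i$, hence $\Phi\in\mathcal{Y}$. Moreover $\Phi\in\nabla_{2}$: picking any $C>1$ with $C^{p-1}\ge 2$ gives
\[
\Phi(t)=t^{p}=\frac{(Ct)^{p}}{C^{p}}\le\frac{(Ct)^{p}}{2C}=\frac{1}{2C}\,\Phi(Ct),\qquad t\ge 0,
\]
where the middle inequality uses $C^{p-1}\ge 2$; note this is precisely where the hypothesis $p>1$ (rather than $p\ge 1$) enters.

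Next I would compare $\Phi^{-1}$ with $\Psi^{-1}$. Here $\Phi^{-1}(t)=t^{1/p}$, and $\Psi^{-1}(t)=t^{1/q}$ for $q<\i$; for $q=\i$ the degenerate $\Psi$ above has $\Psi^{-1}(t)=1$ for all $t\ge 0$, consistent with the convention $t^{1/q}=t^{0}=1$. Using $\frac1p-\frac1q=\frac{\b}{Q}$ we obtain
\[
\Phi^{-1}(t)\,t^{-\b/Q}=t^{1/p-\b/Q}=t^{1/q}=\Psi^{-1}(t),\qquad t>0,
\]
so in particular $\Psi^{-1}(t)\thickapprox \Phi^{-1}(t)\,t^{-\b/Q}$ holds. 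Thus all hypotheses of Theorem~\ref{CommMaxCharOr017}(3) are met, and the corollary follows at once.

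I do not anticipate any real obstacle here: the only steps requiring a moment's attention are the verification that $t^{p}\in\nabla_{2}$ (which forces $p>1$) and the reading of the $q=\i$ endpoint through the degenerate Young function $\Psi$; everything else is a direct substitution into the already established Theorem~\ref{CommMaxCharOr017}.
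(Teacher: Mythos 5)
Your proposal is correct and is exactly the route the paper takes: the corollary is obtained by substituting $\Phi(t)=t^{p}$ and $\Psi(t)=t^{q}$ (with the degenerate Young function at $q=\infty$) into part~3 of Theorem~\ref{CommMaxCharOr017}. Your explicit verifications that $t^{p}\in\mathcal{Y}\cap\nabla_{2}$ precisely when $p>1$ and that $\Psi^{-1}(t)=\Phi^{-1}(t)\,t^{-\b/Q}$ under $\frac1p-\frac1q=\frac{\b}{Q}$ are the details the paper leaves implicit, and they are accurate.
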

\begin{remark} Note that Theorem \ref{CommMaxCharOr017} in the case $\G = \Rn$ were proved in \cite{GulDerHasAMP}.
\end{remark}

The following theorem is valid.
\begin{theorem} \label{CommFrMaxCharOr01W}
	Let $0<\beta<1$, $b\in  L^1_{\rm loc}(\G)$, $\Phi, \Psi$ be Young functions and $\Phi\in\mathcal{Y}$.
	
	$1.~$ If condition \eqref{adFrCharOrl1M} holds,
	then the condition $b\in \dot{\Lambda}_{\beta}(\G)$ is sufficient for the boundedness of $M_{b}$ from $L^{\Phi}(\G)$ to $WL^{\Psi}(\G)$.
	
	$2.~$ If condition \eqref{adFrCharOrl1Mncs} holds and $\frac{t^{1+\varepsilon}}{\Psi(t)}$ is almost decreasing for some $\varepsilon>0$, then the condition $b\in \dot{\Lambda}_{\beta}(\G)$ is necessary for the boundedness of $M_{b}$ from $L^{\Phi}(\G)$ to $WL^{\Psi}(\G)$.
	
	$3.~$ If $\Psi^{-1}(t) \thickapprox \Phi^{-1}(t)t^{-\frac{\b}{Q}}$ and $\frac{t^{1+\varepsilon}}{\Psi(t)}$ is almost decreasing for some $\varepsilon>0$,
	then the condition $b\in \dot{\Lambda}_{\beta}(\G)$ is necessary and sufficient for the boundedness of $M_{b}$ from $L^{\Phi}(\G)$ to $WL^{\Psi}(\G)$.
\end{theorem}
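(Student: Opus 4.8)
The plan is to handle the three parts separately, with essentially all the work concentrated in Part 2.

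For Part 1 I would reduce the weak-type bound for $M_b$ to the corresponding bound for the fractional maximal operator $M_\beta$. By Lemma \ref{pwslip}, if $b\in\dot{\Lambda}_{\beta}(\G)$ then $M_bf(x)\le C\|b\|_{\dot{\Lambda}_{\beta}(\G)}M_\beta f(x)$ pointwise, so it suffices to show that $M_\beta$ maps $L^\Phi(\G)$ into $WL^\Psi(\G)$. This is precisely the weak-type half of Theorem \ref{AdamsFrMaxCharOrl} applied with $\a=\beta$: substituting $t=r^{-Q}$, the condition $r^\beta\Phi^{-1}(r^{-Q})\le C\Psi^{-1}(r^{-Q})$ there turns into $t^{-\beta/Q}\Phi^{-1}(t)\le C\Psi^{-1}(t)$, which is exactly \eqref{adFrCharOrl1M}. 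Since the weak-type statement in Theorem \ref{AdamsFrMaxCharOrl} requires only $\Phi\in\mathcal{Y}$, no $\nabla_2$-hypothesis is needed here, in agreement with the statement.

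For Part 2 I would follow the necessity argument in the proof of Theorem \ref{CommMaxCharOr017}(2), but replace Lemma \ref{ShVM01} (a strong-type tool) by a weak-type analogue. Fix a ball $B\subset\G$. By Lemma \ref{estFrMaxCom}, $|b(y)-b_B|\le M_b(\chi_{_B})(y)$ for $y\in B$, hence
\[
\frac{1}{|B|^{1+\beta/Q}}\int_B|b(y)-b_B|\,dy\le\frac{1}{|B|^{1+\beta/Q}}\int_B M_b(\chi_{_B})(y)\,dy .
\]
The key is to estimate this last integral by $\|M_b(\chi_{_B})\|_{WL^\Psi(\G)}$. For that I would establish the auxiliary fact that, whenever $t^{1+\varepsilon}/\Psi(t)$ is almost decreasing for some $\varepsilon>0$, $g\in WL^\Psi(\G)$ and $E\subset\G$ has finite measure, then
\[
\int_E|g(y)|\,dy\lesssim|E|\,\Psi^{-1}\big(|E|^{-1}\big)\,\|g\|_{WL^\Psi(\G)} .
\]
Granting this with $g=M_b(\chi_{_B})$ and $E=B$, and using $\|M_b(\chi_{_B})\|_{WL^\Psi(\G)}\lesssim\|\chi_{_B}\|_{L^\Phi(\G)}=\dfrac{1}{\Phi^{-1}(|B|^{-1})}$ (the assumed boundedness together with Lemma \ref{charorlc}), I obtain
\[
\frac{1}{|B|^{1+\beta/Q}}\int_B|b(y)-b_B|\,dy\lesssim\frac{1}{|B|^{\beta/Q}}\cdot\frac{\Psi^{-1}(|B|^{-1})}{\Phi^{-1}(|B|^{-1})}\lesssim 1 ,
\]
the last step being condition \eqref{adFrCharOrl1Mncs} with $t=|B|^{-1}$. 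Taking the supremum over all balls $B$ gives $b\in{\rm Lip}_{\beta}(\G)$, and Lemma \ref{CharLipSp}(1) then yields $b\in\dot{\Lambda}_{\beta}(\G)$.

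The main obstacle is precisely the auxiliary $L^1$--$WL^\Psi$ estimate, which is where the extra assumption on $\Psi$ is consumed; everything else is bookkeeping. I would prove it by writing $\int_E|g|=\int_0^\infty m(E,g,t)\,dt\le t_0|E|+\int_{t_0}^\infty m(g,t)\,dt$, using $m(g,t)\le 1/\Psi(t/\lambda)$ with $\lambda=\|g\|_{WL^\Psi(\G)}$, and observing that almost monotonicity of $t^{1+\varepsilon}/\Psi(t)$ forces $\int_{t_0}^\infty dt/\Psi(t/\lambda)\lesssim_{\varepsilon} t_0/\Psi(t_0/\lambda)$; the choice $t_0=\lambda\,\Psi^{-1}(|E|^{-1})$ balances the two terms and produces the claimed bound. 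Finally, Part 3 is immediate: the hypothesis $\Psi^{-1}(t)\thickapprox\Phi^{-1}(t)t^{-\beta/Q}$ implies both \eqref{adFrCharOrl1M} and \eqref{adFrCharOrl1Mncs}, so sufficiency follows from Part 1 and necessity from Part 2.
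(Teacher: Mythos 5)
Your proposal is correct and follows essentially the same route as the paper: Part 1 via Lemma \ref{pwslip} and Theorem \ref{AdamsFrMaxCharOrl}, Part 2 via the pointwise bound $|b(y)-b_{B}|\le M_b(\chi_{_B})(y)$ of Lemma \ref{estFrMaxCom} followed by a layer-cake decomposition split at a threshold $t_0\approx|B|^{\beta/Q}$, with the almost-decreasing hypothesis on $t^{1+\varepsilon}/\Psi(t)$ used exactly where you use it, to control the tail $\int_{t_0}^{\infty}d\lambda/\Psi(\lambda\Phi^{-1}(|B|^{-1})/C)$. The only difference is cosmetic: you package the distribution-function computation as a general weak-type analogue of Lemma \ref{ShVM01} applied to $g=M_b(\chi_{_B})$, whereas the paper runs the same computation directly on the distribution function of $|b-b_{B}|$.
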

\begin{proof}
	(1) The first statement of the theorem follows from Theorem \ref{AdamsFrMaxCharOrl} and Lemma \ref{pwslip}.
	
	(2) For any fixed ball $B_0$ such that $x\in B_0$ by Lemma \ref{estFrMaxCom} we have $|b(x)-b_{B_0}|\leq M_{b} \chi_{B_0}(x)$.
	This together with the boundedness of $M_{b}$ from $L^{\Phi}(\G)$ to $WL^{\Psi}(\G)$ and Lemma \ref{charorlc}
	\begin{align*}
		|\{x\in B_0: |b(x)-b_{B_0}|> \lambda\}| &\leq |\{x\in B_0: M_{b} \chi_{B_0}(x)> \lambda\}|
		\\
		& \leq \frac{1}{\Psi\left(\frac{ \lambda}{C\|\chi_{B_0}\|_{L^{\Phi}}}\right)} = \frac{1}{\Psi\left(\frac{ \lambda \Phi^{-1}(|B_0|^{-1})}{C}\right)}.
	\end{align*}
	
	Let $t>0$ be a constant to be determined later, then
	\begin{align*}
		\int_{B_0} |b(x)-b_{B_0}|dx & = \int_{0}^{\infty} |\{x\in B_0: |b(x)-b_{B_0}|>\lambda\}| d\lambda
		\\
		& = \int_{0}^{t} \{x\in B_0: |b(x)-b_{B_0}|>\lambda\}| d\lambda
		\\
		& ~~~~ + \int_{t}^{\infty} |\{x\in B_0: |b(x)-b_{B_0}|>\lambda\}| d\lambda
		\\
		& \le t|B_0| + \int_{t}^{\infty} \frac{1}{\Psi\left(\frac{ \lambda \Phi^{-1}(|B_0|^{-1})}{C}\right)} d\lambda
		\\
		& \lesssim  t \, |B_0|+\frac{t}{\Psi\left(\frac{ t \Phi^{-1}(|B_0|^{-1})}{C}\right)},
	\end{align*}
	where we use almost decreasingness of $\frac{t^{1+\varepsilon}}{\Psi(t)}$ in the last step.
	
	Set $t=C|B_0|^{\frac{\b}{Q}}$ in the above estimate, we have
	$$
	\int_{B_0} |b(x)-b_{B_0}|dx\lesssim |B_0|^{1+\frac{\b}{Q}}.
	$$
	Thus by Lemma \ref{CharLipSp} we get $b\in \dot{\Lambda}_{\beta}(\G)$ since $B_0$ is an arbitrary ball in $\G$.
	
	(3) The third statement of the theorem follows from the first and second parts of the theorem.
\end{proof}

If we take $\Phi(t)=t^{p}$ and $\Psi(t)=t^{q}$ with $1\le p<\i$ and $1\le q \le\i$ at Theorem \ref{CommFrMaxCharOr01W}, we have the following result.
\begin{corollary}\label{ZhLpCoW}
	Let $0<\beta<1$, $b\in  L^1_{\rm loc}(\G)$, $1\le p<q\le\i$ and $\frac{1}{p}-\frac{1}{q}=\frac{\b}{Q}$. Then the condition $b\in \dot{\Lambda}_{\beta}(\G)$ is necessary and sufficient for the boundedness of $M_{b}$ from $L^{p}(\G)$ to $WL^{q}(\G)$.
\end{corollary}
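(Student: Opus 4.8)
The plan is to obtain Corollary \ref{ZhLpCoW} as a direct specialization of Theorem \ref{CommFrMaxCharOr01W} to the power Young functions $\Phi(t)=t^{p}$ and $\Psi(t)=t^{q}$; essentially all the work is bookkeeping, namely checking that these functions, together with the exponent relation $\frac1p-\frac1q=\frac{\b}{Q}$, meet the hypotheses of that theorem.

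First I would record the elementary facts. Since $p<q\le\i$ forces $1\le p<\i$, the function $\Phi(t)=t^{p}$ is a Young function lying in $\mathcal{Y}$, with $\Phi^{-1}(t)=t^{1/p}$. For $1\le q<\i$, $\Psi(t)=t^{q}$ is a Young function with $\Psi^{-1}(t)=t^{1/q}$, while for $q=\i$ one takes $\Psi$ to be the Young function defining $L^{\i}$, for which $\Psi^{-1}\equiv 1$ and $WL^{\Psi}(\G)=L^{\i}(\G)$. In all cases the hypothesis $\frac1p-\frac1q=\frac{\b}{Q}$ gives
$$
\Phi^{-1}(t)\,t^{-\b/Q}=t^{1/p-\b/Q}=t^{1/q}\approx\Psi^{-1}(t)\qquad(t>0),
$$
so that $\Psi^{-1}(t)\approx\Phi^{-1}(t)\,t^{-\b/Q}$; in particular both one-sided conditions \eqref{adFrCharOrl1M} and \eqref{adFrCharOrl1Mncs} are satisfied (indeed with comparable sides).

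It remains to verify the regularity assumption on $\Psi$ used in the necessity half, namely that $\frac{t^{1+\varepsilon}}{\Psi(t)}$ is almost decreasing for some $\varepsilon>0$. With $\Psi(t)=t^{q}$ this quotient equals $t^{1+\varepsilon-q}$, and since $1\le p<q$ forces $q>1$, any $\varepsilon\in(0,q-1]$ makes the exponent nonpositive, so the quotient is non-increasing, hence almost decreasing (the endpoint $q=\i$ is trivial). With all hypotheses checked, part (3) of Theorem \ref{CommFrMaxCharOr01W} yields exactly the asserted equivalence. There is no real obstacle here; the only points needing a little care are the correct handling of the generalized inverses $\Phi^{-1},\Psi^{-1}$ — especially the degenerate endpoint $q=\i$, where $\Psi$ must be read as the Young function of $L^{\i}$ — and the observation that the scaling identity $\frac1p-\frac1q=\frac{\b}{Q}$ is precisely what converts the two separate one-sided conditions of parts (1) and (2) into the two-sided comparison demanded by part (3).
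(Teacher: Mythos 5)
Your proposal is correct and is essentially the paper's own argument: the corollary is obtained by specializing Theorem \ref{CommFrMaxCharOr01W}(3) to $\Phi(t)=t^{p}$, $\Psi(t)=t^{q}$, and your verifications (the equivalence $\Psi^{-1}(t)\approx\Phi^{-1}(t)t^{-\b/Q}$ from $\frac1p-\frac1q=\frac{\b}{Q}$, the almost-decreasingness of $t^{1+\varepsilon}/\Psi(t)$ using $q>1$, and the reading of $\Psi$ at $q=\i$) are exactly the bookkeeping the paper leaves implicit. Your observation that only the weak-type conclusion is needed, so that $\Phi\in\nabla_2$ is not required and $p=1$ is admissible, correctly accounts for the wider range here compared with Corollary \ref{ZhLpCo}.
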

\begin{remark}
	
Note that Theorem \ref{CommFrMaxCharOr01W} in the case $\G = \Rn$ were proved in \cite[Corollary 4.6]{GulDerHasAMP}.
\end{remark}

\

\section{Commutators of Fractional Maximal Function in Orlicz Spaces}

For a function $b$ defined on $\G$, we denote
\[
b^{-}(x) :=
\begin{cases}
0 ~, & \text{if } b(x)\geq 0\\
|b(x)|, & \text{if } b(x)<0
\end{cases}
\]
and $b^{+}(x):=|b(x)|-b^{-}(x)$. Obviously, $b^{+}(x)-b^{-}(x)=b(x)$.

The following relations between $[b,M]$ and $M_{b}$ are valid.
Let $b$ be any non-negative locally integrable function. Then for all
$f \in L^1_{\rm loc}(\G)$ and $x\in\G$ the following inequality is valid
\begin{align*}
&\big|[b,M]f(x)\big|  = \big|b(x) Mf(x) - M(bf)(x)\big|
\\
& = \big|M(b(x)f)(x)-M(bf)(x)\big| \le M(|b(x)-b|f)(x) = M_{b}(f)(x).
\end{align*}

If $b$ is any locally integrable function on $\G$, then
\begin{equation}\label{commaxcom}
|[b,M]f(x)|\leq M_{b}(f)(x)+ 2b^{-}(x)Mf(x),\qquad x\in\G
\end{equation}
holds for all $f \in L^1_{\rm loc}(\G)$ (see, for example, \cite{DerGulHasPis2018,ZhWuSun}).

Obviously, the $M_{b}$ and $[b,M]$ operators are essentially different from each other because $M_{b}$ is positive and sublinear and $[b,M]$ is neither positive.

\begin{lemma}\label{LipschCharM02}
	Let $b\in  L^1_{\rm loc}(\G)$ and $\Phi$ be a Young function. Then the following statements are equivalent.
	
	$1.~$ $b\in \dot{\Lambda}_{\beta}(\G)$ and $b \ge 0$.
	
	$2.~$ For all $\Phi\in\Delta_2$ we have
	\begin{align} \label{Ram02D}
		& \sup_{B} |B|^{-\frac{\b}{Q}} \, \Phi^{-1}\big(|B|^{-1}\big) \left\|b(\cdot)-M_{B}(b)(\cdot)\right\|_{L^{\Phi}(B)} \le C.
	\end{align}
	
	$3.~$ There exists $\Phi\in\Delta_2$ such that \eqref{Ram02D} is valid.
\end{lemma}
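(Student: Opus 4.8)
The plan is to establish the cycle of implications $(1)\Rightarrow(2)\Rightarrow(3)\Rightarrow(1)$. The implication $(2)\Rightarrow(3)$ is immediate, since $\mathcal{Y}\cap\Delta_2\neq\emptyset$ (for instance $\Phi(t)=t$ lies in it), so all the content is in $(1)\Rightarrow(2)$ and $(3)\Rightarrow(1)$.

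For $(1)\Rightarrow(2)$, assume $b\geq0$ and $b\in\dot{\Lambda}_{\beta}(\G)$, and fix a $\G$-ball $B$. Since $b=|b|$, the value $M_{B}b(x)$ is the supremum of the averages $b_{B'}=|B'|^{-1}\int_{B'}b(y)\,dy$ over balls $B'$ with $x\in B'\subseteq B$. For any such $B'$ and any $x\in B$,
\[
\big|b(x)-b_{B'}\big|\leq\frac{1}{|B'|}\int_{B'}|b(x)-b(y)|\,dy\leq C\,\|b\|_{\dot{\Lambda}_{\beta}(\G)}\,|B|^{\beta/Q},
\]
where I use $|b(x)-b(y)|\leq\|b\|_{\dot{\Lambda}_{\beta}(\G)}\rho(y^{-1}x)^{\beta}\lesssim\|b\|_{\dot{\Lambda}_{\beta}(\G)}|B|^{\beta/Q}$ (valid since $x,y\in B'\subseteq B$, by the quasi-triangle inequality for $\rho$ together with $|B(z,s)|=c_{1}s^{Q}$). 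Since $M_Bb(x)\geq b_B\geq b(x)-C\|b\|_{\dot{\Lambda}_{\beta}(\G)}|B|^{\beta/Q}$ and every $b_{B'}\leq b(x)+C\|b\|_{\dot{\Lambda}_{\beta}(\G)}|B|^{\beta/Q}$, taking the supremum over $B'$ gives $|b(x)-M_{B}b(x)|\leq C\|b\|_{\dot{\Lambda}_{\beta}(\G)}|B|^{\beta/Q}$ for every $x\in B$; hence $|b-M_{B}b|\leq C\|b\|_{\dot{\Lambda}_{\beta}(\G)}|B|^{\beta/Q}\chi_{_B}$, and Lemma \ref{charorlc} yields
\[
\|b-M_{B}b\|_{L^{\Phi}(B)}\leq C\,\|b\|_{\dot{\Lambda}_{\beta}(\G)}\,\frac{|B|^{\beta/Q}}{\Phi^{-1}(|B|^{-1})},
\]
which is \eqref{Ram02D} with constant $\approx\|b\|_{\dot{\Lambda}_{\beta}(\G)}$, uniformly in $B$ and for every Young function $\Phi$ (a fortiori for $\Phi\in\Delta_2$).

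For $(3)\Rightarrow(1)$ I would proceed in two steps. First, to prove $b\geq0$: the Lebesgue differentiation theorem on $\G$ gives $M_{B}b(x)\geq|b(x)|$ for a.e. $x\in B$, hence $M_{B}b(x)-b(x)\geq|b(x)|-b(x)=2b^{-}(x)\geq0$; by monotonicity of the Orlicz norm, $\|b^{-}\|_{L^{\Phi}(B)}\leq\tfrac12\|b-M_{B}b\|_{L^{\Phi}(B)}$, which combined with \eqref{Ram02D} gives $\Phi^{-1}(|B|^{-1})\|b^{-}\|_{L^{\Phi}(B)}\lesssim|B|^{\beta/Q}$. Applying Lemma \ref{ShVM01} to $b^{-}$ converts this Orlicz estimate into $|B|^{-1}\int_{B}b^{-}(y)\,dy\lesssim|B|^{\beta/Q}$; taking $B=B(x,r)$ and letting $r\to0$, the Lebesgue differentiation theorem forces $b^{-}(x)=0$ for a.e. $x$, i.e. $b\geq0$. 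Second, with $b\geq0$ in hand, for every ball $B$ one has $M_{B}b\geq b_{B}$ on $B$ (take $B'=B$ in the supremum defining $M_{B}b$) and $M_{B}b\geq b$ a.e.; since $\int_{B}(M_{B}b-b_{B})\,dx=\int_{B}(M_{B}b-b)\,dx=\int_{B}|b-M_{B}b|\,dx$, the triangle inequality yields $\int_{B}|b-b_{B}|\,dx\leq2\int_{B}|b-M_{B}b|\,dx$. Estimating the right-hand side first by Lemma \ref{ShVM01} and then by \eqref{Ram02D} gives $\int_{B}|b-b_{B}|\,dx\lesssim|B|^{1+\beta/Q}$, so $b\in{\rm Lip}_{\beta}(\G)$; by Lemma \ref{CharLipSp}(1) this means $b\in\dot{\Lambda}_{\beta}(\G)$, and together with $b\geq0$ we obtain statement $(1)$.

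The step that requires genuine care is the proof that $b\geq0$: there one must pass from a family of Orlicz-norm bounds on $b^{-}$ over all $\G$-balls to the pointwise conclusion $b^{-}\equiv0$, and the device for doing so is to first degrade the Orlicz estimate to an $L^{1}$-average estimate (via Lemma \ref{ShVM01}) and then invoke the Lebesgue differentiation theorem for the ball basis $\{B(x,r)\}_{r>0}$ on the stratified group $\G$ (legitimate since $(\G,\rho,dx)$ is a doubling metric measure space). Everything else reduces to H\"older's inequality, Lemma \ref{charorlc}, Lemma \ref{ShVM01}, and the norm equivalence $\|b\|_{\dot{\Lambda}_{\beta}(\G)}\approx\|b\|_{{\rm Lip}_{\beta}(\G)}$ of Lemma \ref{CharLipSp}(1); the hypothesis $\Phi\in\Delta_2$ is used only to ensure that the Orlicz quantities in \eqref{Ram02D} are meaningful.
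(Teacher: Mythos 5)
Your proof is correct, and in two places it takes a genuinely different (and arguably cleaner) route than the paper. For $(1)\Rightarrow(2)$ the paper does not estimate $|b-M_Bb|$ directly: it first identifies $b(x)-M_B(b)(x)=[b,M](\chi_{_B})(x)$ for $x\in B$ via Lemma \ref{estFrMax}, and then bounds the commutator by $\|b\|_{\dot{\Lambda}_{\beta}}M_{\b}(\chi_{_B})(x)\lesssim \|b\|_{\dot{\Lambda}_{\beta}}|B|^{\b/Q}$ using \eqref{3.1gogmus} and Lemmas \ref{pwslip} and \ref{estFrMax}; your direct computation of $M_Bb$ as a supremum of averages $b_{B'}$ (valid since $b\ge0$) reaches the same pointwise bound $|b-M_Bb|\lesssim\|b\|_{\dot{\Lambda}_{\beta}}|B|^{\b/Q}$ without the commutator detour, at the cost of not exhibiting the identity that is reused later in Theorem \ref{dnndk}. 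For $(3)\Rightarrow(1)$ the paper proves the Lipschitz estimate via the Bastero--Milman--Ruiz median decomposition $E=\{b\le b_B\}$, $F=\{b>b_B\}$ with $\int_E|b-b_B|=\int_F|b-b_B|$ and the inequality $|b-b_B|\le|b-M_Bb|$ on $E$; your replacement --- the observation that $M_Bb-b_B\ge0$, $M_Bb-b\ge0$ a.e.\ and $\int_B(M_Bb-b_B)=\int_B(M_Bb-b)$, followed by the triangle inequality --- yields the same bound $\int_B|b-b_B|\le2\int_B|b-M_Bb|$ with less machinery and no appeal to \cite{BasMilRui}. The nonnegativity argument is essentially the paper's: both rest on $M_Bb\ge|b|$ a.e.\ (Lebesgue differentiation), the pointwise inequality $M_Bb-b\ge 2b^{-}\ge0$, degrading to an $L^1$ average via Lemma \ref{ShVM01}, and differentiating again as $|B|\to0$. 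The quantifier handling between statements $(2)$ and $(3)$ is also correct. One cosmetic caveat: when you shrink balls $B(x,r)$ inside a fixed ball $B$ for the differentiation step, the quasi-triangle inequality with constant $c_0\ge1$ means containment $B(x,r)\subseteq B$ is only guaranteed for $r$ small depending on $x$; this is harmless (the homogeneous balls form a neighbourhood basis), and the paper glosses over the same point.
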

\begin{proof}
$(1)\Rightarrow (2)$: Let $b$ be any non-negative locally integrable function. Then
\begin{equation}\label{3.1gogmus}
|[b,M] f(x)|\leq M_{b}(f)(x), \qquad x \in \G
\end{equation}
holds for all $f \in L^1_{\rm loc}(\G)$.

By Lemmas \ref{estFrMax} and \ref{pwslip}, for all $x \in B$, we have
\begin{align}\label{Shaf71}
\big| [b,M] \big(\chi_{_{B}}\big)(x) \big|, \big| [|b|,M] \big(\chi_{_{B}}\big)(x) \big| & \le \|b\|_{\dot{\Lambda}_{\beta}} \, M_{\b}\big(b\chi_{_{B}}\big)(x) \notag
\\
& \lesssim  \|b\|_{\dot{\Lambda}_{\beta}} \, |B|^{\frac{\b}{Q}},
\end{align}
and
\begin{align}\label{Shaf72}
\big| [b,M] \big(\chi_{_{B}}\big)(x) \big|,  \big| [|b|,M] \big(\chi_{_{B}}\big)(x) \big| \le \|b\|_{\dot{\Lambda}_{\beta}} \, M_{\b}\big(b\chi_{_{B}}\big)(x) \lesssim  \|b\|_{\dot{\Lambda}_{\beta}} \, |B|^{\frac{\b}{Q}}.
\end{align}

For any fixed $\G$-ball $B$,
\begin{align} \label{Pak01}
I =	& |B|^{-\frac{\b}{Q}} \, \Psi^{-1}\big(|B|^{-1}\big) \left\|b(\cdot)-M_{B}(b)(\cdot)\right\|_{L^{\Psi}(B)}
\end{align}	

By Lemma \ref{estFrMax}, for any $x \in B$,
\begin{align*}
	& b(x) - M_{B}(b)(x) =	\big( b(x) - M_{B}(b)(x) \big)
	\\
	& =  \big( b(x) M_{B} \, \big(\chi_{_{B}}\big)(x) - M\big(b\chi_{_{B}}\big)(x) \big)
	= [b,M] \big(\chi_{_{B}}\big)(x).
\end{align*}
	
Therefore, from \eqref{Shaf71} we obtain
\begin{align}\label{4.2arxiv}
& I = |B|^{-\frac{\b}{Q}} \, \Psi^{-1}\big(|B|^{-1}\big)\|b(\cdot) - M_{B}(b)(\cdot)\|_{L^{\Psi}(B)} \notag
\\
& = |B|^{-\frac{\b}{Q}} \, \Psi^{-1}\big(|B|^{-1}\big) \, \|b(\cdot) M(\chi_{B})(\cdot)-M(b\chi_{B})
(\cdot)\|_{L^{\Psi}(B)} \notag
\\
& = |B|^{-\frac{\b}{Q}} \, \Psi^{-1}\big(|B|^{-1}\big)\|[b,M](\chi_{B})\|_{L^{\Psi}(B)}
\\
& \lesssim |B|^{-\frac{\b}{Q}} \, \Psi^{-1}\big(|B|^{-1}\big) \, \|b\|_{\dot{\Lambda}_{\beta}} \, |B|^{\frac{\b}{Q}} \, \|\chi_{B}\|_{L^{\Psi}} \lesssim \|b\|_{\dot{\Lambda}_{\beta}}. \notag
\end{align}

By \eqref{4.2arxiv}, we get
\begin{align*}
	& |B|^{-\frac{\b}{Q}} \, \Psi^{-1}\big(|B|^{-1}\big) \, \big\|b(\cdot) - M_{B}(b)(\cdot) \big\|_{L^{\Psi}(B)}  \lesssim \|b\|_{\dot{\Lambda}_{\beta}},
\end{align*}
which leads us to \eqref{Ram02D} since $B$ is arbitrary.

$(3)\Rightarrow (1)$: Now, let us prove $b\in \dot{\Lambda}_{\beta}(\G)$ and $b \ge 0$. For any $\G$-ball $B$, let $E = \{y \in B : b(y) \le b_B\}$ and $F = \{y \in B : b(y) > b_B\}$. The following equality is true (see \cite[page 3331]{BasMilRui}):
\begin{align*}
	& \int_{E}|b(y)-b_{B}|dy  = \int_{F}|b(y)-b_{B}|dy.
\end{align*}
Since $b(y) \le b_B \le |b_B| \le M_{B}(b)(y)$ for any $y \in E$, we obtain
\begin{align*}
	& |b(y)-b_{B}| \le \big|b(y) - M_{B}(b)(y)\big|, ~ y \in E .
\end{align*}
Then from Lemma \ref{ShVM01} and \eqref{4.2arxiv} we have
\begin{align*}
	& \frac{1}{|B|^{1+\frac{\b}{Q}}} \int_{B}|b(y)-b_{B}|dy = \frac{2}{|B|^{1+\frac{\b}{Q}}} \int_{E}|b(y)-b_{B}|dy
	\\
	& \le \frac{2}{|B|^{1+\frac{\b}{Q}}} \int_{E} \big|b(y) - M_{B}(b)(y)\big|dy
	\\
	& \le \frac{2}{|B|^{1+\frac{\b}{Q}}} \int_{B} \big|b(y) - M_{B}(b)(y)\big|dy .
\end{align*}
Thus by Lemma \ref{CharLipSp} we get $b\in \dot{\Lambda}_{\beta}(\G)$.

In order to prove $b \ge 0$, it suffices to show $b^{-} = 0$. Observe that $0 \le b^{+}(y) \le |b(y)| \le M_{B}(b)(y)$ for
$y \in B$, therefore, for any $y \in B$, there holds
\begin{align*}
	& 0 \le b^{-}(y) \le M_{B}(b)(y) - b^{+}(y) + b^{-}(y) = M_{B}(b)(y) - b(y).
\end{align*}
Then for any $\G$-ball $B$, we have
\begin{align*}
	\frac{1}{|B|} \int_{B} b^{-}(y) dy & \le \frac{1}{|B|} \int_{B} \big(M_{B}(b)(y) - b(y)\big)dy
	\\
	& = \frac{1}{|B|} \int_{B} \big|b(y) - M_{B}(b)(y)\big|dy
	\\
	& \le \frac{|B|^{\frac{\b}{Q}}}{|B|^{1+\frac{\b}{Q}}} \int_{B} \big|b(y) - M_{B}(b)(y)\big|dy \le C \, |B|^{\frac{\b}{Q}}.
\end{align*}
Let $|B| \to 0$ with $x \in B$. Lebesgue's differentiation theorem assures that
\begin{align*}
	0 \le b^{-}(x) = \lim\limits_{|B| \to 0}\frac{1}{|B|} \int_{B} b^{-}(y) dy &  = 0.
\end{align*}
Thus $b\in \dot{\Lambda}_{\beta}(\G)$ and $b \ge 0$.
The proof of Lemma \ref{LipschCharM02} is completed.
\end{proof}

\begin{theorem}\label{dnndk}
Let $0<\b<1$ and $b$ be a locally integrable function. Suppose that $\Phi, \Psi$ be Young functions, $\Phi\in\mathcal{Y}\cap\nabla_2$ and $\Psi^{-1}(t) \thickapprox \Phi^{-1}(t)t^{-\frac{\b}{Q}}$. Then the following statements are equivalent.
	
$1.~$ $b\in \dot{\Lambda}_{\beta}(\G)$ and $b \ge 0$.
	
$2.~$ $[b,M]$ is bounded from $L^{\Phi}(\G)$ to $L^{\Psi}(\G)$.
	
$3.~$ There exists a constant $C > 0$ such that
\begin{equation}\label{maxbalfx}
\sup_{B} |B|^{-\frac{\b}{Q}} \, \Psi^{-1}\big(|B|^{-1}\big) \, \big\|b(\cdot) - M_{B}(b)(\cdot) \big\|_{L^{\Psi}(B)} \le C.
\end{equation}

$4.~$ There exists a constant $C > 0$ such that
\begin{equation}\label{FrMaxhgFR}
\sup_{B} |B|^{-1-\frac{\b}{Q}} \, \big\|b(\cdot) - M_{B}(b)(\cdot)\big\|_{L^{1}(B)} \le C.
\end{equation}
\end{theorem}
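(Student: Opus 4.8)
The plan is to prove Theorem~\ref{dnndk} by establishing the cycle of implications $(1)\Rightarrow(2)\Rightarrow(3)\Rightarrow(4)\Rightarrow(1)$, leaning heavily on the machinery already developed. For $(1)\Rightarrow(2)$: assuming $b\in\dot\Lambda_\beta(\G)$ with $b\ge 0$, the relation \eqref{3.1gogmus} gives the pointwise bound $|[b,M]f(x)|\le M_b f(x)$, and then Lemma~\ref{pwslip} together with Theorem~\ref{AdamsFrMaxCharOrl} (applicable since $\Phi\in\nabla_2$ and $\Psi^{-1}(t)\approx\Phi^{-1}(t)t^{-\b/Q}$, which is exactly condition \eqref{adRieszCharOrl2} after the substitution $t=r^{-Q}$) yields boundedness of $M_\b$ from $L^\Phi(\G)$ to $L^\Psi(\G)$, hence of $[b,M]$. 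Note the nonnegativity of $b$ is essential here, since the general bound \eqref{commaxcom} carries the extra term $2b^-(x)Mf(x)$ which need not be controlled.

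For $(2)\Rightarrow(3)$: this is the analogue of \eqref{4.2arxiv} run in reverse. Fix a $\G$-ball $B$. By Lemma~\ref{estFrMax}(2) we have, for $x\in B$, the identity $b(x)-M_B(b)(x)=b(x)M(\chi_B)(x)-M(b\chi_B)(x)=[b,M](\chi_B)(x)$. Therefore
\[
|B|^{-\frac{\b}{Q}}\,\Psi^{-1}\big(|B|^{-1}\big)\,\big\|b(\cdot)-M_B(b)(\cdot)\big\|_{L^\Psi(B)}
= |B|^{-\frac{\b}{Q}}\,\Psi^{-1}\big(|B|^{-1}\big)\,\big\|[b,M](\chi_B)\big\|_{L^\Psi(B)}.
\]
Applying the boundedness of $[b,M]$ from $L^\Phi$ to $L^\Psi$ gives $\|[b,M](\chi_B)\|_{L^\Psi(B)}\lesssim\|\chi_B\|_{L^\Phi}=1/\Phi^{-1}(|B|^{-1})$ by Lemma~\ref{charorlc}; combining with $\Psi^{-1}(t)\approx\Phi^{-1}(t)t^{-\b/Q}$ evaluated at $t=|B|^{-1}$, the right-hand side is $\lesssim |B|^{-\b/Q}\cdot |B|^{\b/Q}=1$, uniformly in $B$, which is \eqref{maxbalfx}.

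For $(3)\Rightarrow(4)$: this is a direct application of Lemma~\ref{ShVM01} with $f=b-M_B(b)$ on the ball $B$, namely $\int_B|b(y)-M_B(b)(y)|dy\le 2|B|\,\Phi^{-1}(|B|^{-1})\,\|b-M_B(b)\|_{L^\Phi(B)}$; but since we are dealing with the $L^\Psi$ norm, I apply instead the inequality with $\Phi$ replaced by $\Psi$, obtaining $|B|^{-1}\|b-M_B(b)\|_{L^1(B)}\le 2\Psi^{-1}(|B|^{-1})\|b-M_B(b)\|_{L^\Psi(B)}$, and then \eqref{maxbalfx} gives $|B|^{-1-\b/Q}\|b-M_B(b)\|_{L^1(B)}\lesssim 1$. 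Finally, $(4)\Rightarrow(1)$ is carried out exactly as in the implication $(3)\Rightarrow(1)$ of Lemma~\ref{LipschCharM02}: the splitting of $B$ into $E=\{b\le b_B\}$ and $F=\{b>b_B\}$ together with $\int_E|b-b_B|=\int_F|b-b_B|$ and the pointwise bound $|b(y)-b_B|\le|b(y)-M_B(b)(y)|$ on $E$ gives $\frac{1}{|B|^{1+\b/Q}}\int_B|b-b_B|dy\lesssim\frac{1}{|B|^{1+\b/Q}}\int_B|b-M_B(b)|dy\lesssim 1$, whence $b\in\dot\Lambda_\beta(\G)$ by Lemma~\ref{CharLipSp}; and $b\ge 0$ follows from the estimate $0\le b^-(y)\le M_B(b)(y)-b(y)$ combined with \eqref{FrMaxhgFR} and Lebesgue's differentiation theorem, as in Lemma~\ref{LipschCharM02}.

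The main obstacle is the implication $(2)\Rightarrow(3)$, specifically the need to pass from an operator norm estimate to a testing-on-characteristic-functions estimate while keeping track of the correct Orlicz-norm scaling: one must verify that $[b,M](\chi_B)$ restricted to $B$ coincides with $b-M_B(b)$ via Lemma~\ref{estFrMax}(2) (this is where localization of the maximal operator to the ball is used), and then that the two-sided equivalence $\Psi^{-1}(t)\approx\Phi^{-1}(t)t^{-\b/Q}$ is exactly what makes the $|B|$-powers cancel. Everything else is a transcription of arguments already present in Lemma~\ref{LipschCharM02} and the proof of Theorem~\ref{CommMaxCharOr017}; I would present $(1)\Leftrightarrow(3)$ as essentially Lemma~\ref{LipschCharM02} with $\Psi$ in place of $\Phi$ (noting $\Psi\in\Delta_2$ follows from $\Phi\in\Delta_2$, which in turn is implied by $\Phi\in\mathcal{Y}$ together with the growth hypothesis), so that the genuinely new content is the equivalence with boundedness of $[b,M]$ in statement~$(2)$.
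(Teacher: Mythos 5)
Your proposal is correct and follows essentially the same route as the paper: the cycle $(1)\Rightarrow(2)\Rightarrow(3)\Rightarrow(4)\Rightarrow(1)$ with the same ingredients — the pointwise bound $|[b,M]f|\le M_b f$ for $b\ge 0$ plus Lemma~\ref{pwslip} and Theorem~\ref{AdamsFrMaxCharOrl} for $(1)\Rightarrow(2)$, the identity $[b,M](\chi_B)=b-M_B(b)$ on $B$ from Lemma~\ref{estFrMax}(2) for $(2)\Rightarrow(3)$, the $\Psi$-version of Lemma~\ref{ShVM01} for $(3)\Rightarrow(4)$, and the $E/F$ splitting together with Lebesgue differentiation for $(4)\Rightarrow(1)$, exactly as in Lemma~\ref{LipschCharM02} (the paper merely packages $(1)\Leftrightarrow(3)$ and $(1)\Leftrightarrow(4)$ as citations of that lemma rather than closing a single cycle). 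One parenthetical in your last paragraph is wrong, though it plays no role in your actual argument: $\Phi\in\mathcal{Y}$ together with $\Psi^{-1}(t)\approx\Phi^{-1}(t)t^{-\b/Q}$ does \emph{not} imply $\Phi\in\Delta_2$ (consider $\Phi(t)=e^t-1$, which lies in $\mathcal{Y}\cap\nabla_2$ but not in $\Delta_2$); since your $(4)\Rightarrow(1)$ reproduces the relevant part of Lemma~\ref{LipschCharM02} directly and never uses $\Delta_2$, this does not affect the proof.
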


\begin{proof}
Part "$(1) \Leftrightarrow (3)$" and part "$(1) \Leftrightarrow (4)$" follows from Lemma \ref{LipschCharM02}.
	
	$(1)\Rightarrow (2)$: 	
	It follows from \eqref{3.1gogmus} and Theorem \ref{CommMaxCharOr017} that $[b,M_\a]$ is bounded from $L^{\Phi}(\G)$ to $L^{\Psi}(\G)$ since $b\in \dot{\Lambda}_{\beta}(\G)$ and $b \ge 0$.
	
	$(2)\Rightarrow (3)$: For any fixed ball $B \subset \G$ and all $x \in B$, we have (see \cite[pp. 13]{GulResMath2022}).
	$$
	M(\chi_{B})(x) = 1 \qquad \text{and} \qquad M(b\chi_{B})(x)=M_{B}(b)(x).
	$$
	Since $[b,M]$ is bounded from $L^{\Phi}(\G)$ to $L^{\Psi}(\G)$, then
\begin{align}\label{4.2arxivF}
&|B|^{-\frac{\b}{Q}} \, \Psi^{-1}\big(|B|^{-1}\big)\|b(\cdot) - M_{B}(b)(\cdot)\|_{L^{\Psi}(B)} \notag
\\
& = |B|^{-\frac{\b}{Q}}\Psi^{-1}\big(|B|^{-1}\big)\|b(\cdot)M(\chi_{B})(\cdot) - M(b\chi_{B})
(\cdot)\|_{L^{\Psi}(B)} \notag
\\
& = |B|^{-\frac{\b}{Q}}\Psi^{-1}\big(|B|^{-1}\big)\|[b,M](\chi_{B})\|_{L^{\Psi}(B)}
\\
& \leq C |B|^{-\frac{\b}{Q}}\Psi^{-1}\big(|B|^{-1}\big)\|\chi_{B}\|_{L^{\Phi}} \leq C \notag
\end{align}
which implies (3) since the ball $B \subset \G$ is arbitrary.

$(3) \Rightarrow (4)$. We deduce \eqref{FrMaxhgFR} from \eqref{maxbalfx}. Assume \eqref{maxbalfx} holds, then for any fixed $\G$-ball $B$, it follows from Lemma \ref{lemHold} and \eqref{maxbalfx} that
\begin{align*}
	& |B|^{-1-\frac{\b}{Q}} \,  \left\|b(\cdot) - M_{B}(b)(\cdot)\right\|_{L^{1}(B)}
	\\
	& \le 2 \, |B|^{-\frac{\b}{Q}} \, \Psi^{-1}\big(|B|^{-1}\big) \big\|b(\cdot) - M_{B}(b)(\cdot) \big\|_{L^{\Psi}(B)}  \le C \, ,
\end{align*}
where the constant $C$ is independent of $B$. So we obtain \eqref{FrMaxhgFR}.
	
The proof of Theorem \ref{dnndk} is completed.

\end{proof}

If we take $\Phi(t)=t^{p}$ and $\Psi(t)=t^{q}$ with $1\le p<\i$ and $1\le q \le\i$ at Theorem \ref{dnndk}, we have the following result.
\begin{corollary}\label{ZhLpCoxt}
Let $0<\beta<1$, $b\in  L^1_{\rm loc}(\G)$, $b$ be a locally integrable function, $1< p<q\le\i$ and $\frac{1}{p}-\frac{1}{q}=\frac{\b}{Q}$.
Then the following statements are equivalent:
	
$1.~$ $b\in \dot{\Lambda}_{\beta}(\G)$ and $b \ge 0$.
	
$2.~$ $[b,M]$ is bounded from $L^{p}(\G)$ to $L^{q}(\G)$.
	
$3.~$ There exists a constant $C > 0$ such that
\begin{equation*}
\sup_{B} \frac{1}{|B|^{\frac{\b}{Q}}} \, \left(\frac{1}{|B|}\int_{B} \big|b(x) - M_{B}(b)(x)\big|^q dx\right)^{1/q} \le C.
\end{equation*}

$4.~$ There exists a constant $C > 0$ such that
\begin{equation*}
\sup_{B} \frac{1}{|B|^{1+\frac{\b}{Q}}} \int_{B} \big|b(x) - M_{B}(b)(x)\big| dx\le C.
\end{equation*}
\end{corollary}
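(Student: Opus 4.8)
The plan is to derive Corollary \ref{ZhLpCoxt} directly from Theorem \ref{dnndk} by specializing to the power Young functions $\Phi(t)=t^{p}$ and $\Psi(t)=t^{q}$; the only real content is to check that the hypotheses of that theorem hold for these choices and then to rewrite its four equivalent statements in power form.

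First I would verify the structural hypotheses. For $1<p<\infty$ the function $\Phi(t)=t^{p}$ is a Young function lying in $\mathcal{Y}$ (it is finite and positive on $(0,\infty)$) and in $\nabla_{2}$ --- this is exactly where $p>1$ enters --- while $\Psi(t)=t^{q}$ is a Young function for $1\le q<\infty$; for the endpoint $q=\infty$ one uses $\Psi(t)=0$ on $[0,1]$ and $\Psi(t)=\infty$ on $(1,\infty)$, so that $L^{\Psi}(\G)=L^{\infty}(\G)$. Since $\Phi^{-1}(s)=s^{1/p}$ and $\Psi^{-1}(s)=s^{1/q}$ (with $\Psi^{-1}\equiv 1$ when $q=\infty$), the scaling hypothesis $\Psi^{-1}(t)\thickapprox\Phi^{-1}(t)\,t^{-\b/Q}$ of Theorem \ref{dnndk} reduces to $t^{1/q}\thickapprox t^{1/p-\b/Q}$, which holds --- indeed with equal exponents --- precisely under the standing assumption $\frac1p-\frac1q=\frac\b Q$; the case $q=\infty$ is $\frac1p=\frac\b Q$, where both sides collapse to the constant $1$. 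Hence Theorem \ref{dnndk} is applicable.

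Next I would translate the four conclusions. By definition of the Orlicz norm, $L^{\Phi}(\G)=L^{p}(\G)$, $L^{\Psi}(\G)=L^{q}(\G)$, and $\|g\|_{L^{\Psi}(B)}=\big(\int_{B}|g|^{q}\big)^{1/q}$ (the essential supremum over $B$ when $q=\infty$), so statement~2 of Theorem \ref{dnndk} becomes statement~2 of the corollary verbatim. Substituting $\Psi^{-1}(|B|^{-1})=|B|^{-1/q}$ into \eqref{maxbalfx} yields
\[
|B|^{-\frac\b Q}\,|B|^{-\frac1q}\Big(\int_{B}\big|b(x)-M_{B}(b)(x)\big|^{q}\,dx\Big)^{1/q}
=\frac{1}{|B|^{\frac\b Q}}\Big(\frac{1}{|B|}\int_{B}\big|b(x)-M_{B}(b)(x)\big|^{q}\,dx\Big)^{1/q},
\]
which is statement~3, and \eqref{FrMaxhgFR} is literally $\sup_{B}|B|^{-1-\b/Q}\int_{B}|b(x)-M_{B}(b)(x)|\,dx\le C$, i.e.\ statement~4. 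Thus the four assertions of Corollary \ref{ZhLpCoxt} correspond one-to-one with those of Theorem \ref{dnndk}, and equivalence of all four is inherited.

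No serious obstacle arises here --- the argument is pure substitution --- and the only point needing a little care is the endpoint $q=\infty$: one must confirm that the limiting Young function $\Psi$ still satisfies the scaling hypothesis (it does, both sides being $1$) and that statements~3 and~4 take their natural limiting forms, with the $L^{q}$-average in statement~3 read as an essential supremum.
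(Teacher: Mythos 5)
Your proposal is correct and follows exactly the route the paper intends: the paper derives Corollary \ref{ZhLpCoxt} by substituting $\Phi(t)=t^{p}$, $\Psi(t)=t^{q}$ into Theorem \ref{dnndk}, and you have simply supplied the routine verifications (that $t^{p}\in\mathcal{Y}\cap\nabla_{2}$ for $p>1$, that the scaling condition reduces to $\frac1p-\frac1q=\frac{\b}{Q}$, and the translation of the four statements, including the $q=\infty$ endpoint). Nothing is missing.
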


\begin{remark}
Note that Theorem \ref{dnndk} in the case $\G = \Rn$ and $b \ge 0$ were proved in \cite{GulDerHasAMP} and in the case $\G = \Rn$  in \cite{Zh2017}.
\end{remark}


\begin{theorem}\label{dnndkCommSharpMax}
	Let $0<\b<1$ and $b$ be a locally integrable function. Suppose that $\Phi, \Psi$ be Young functions, $\Phi\in\mathcal{Y}\cap\nabla_2$ and $\Psi^{-1}(t) \thickapprox \Phi^{-1}(t)t^{-\frac{\b}{Q}}$. Then the following statements are equivalent:
	
	$1.~$ $b\in \dot{\Lambda}_{\beta}(\G)$ and $b \ge 0$.
	
	$2.~$ $[b,M^{\sharp}]$ is bounded from $L^{\Phi}(\G)$ to $L^{\Psi}(\G)$.
	
	$3.~$ There exists a constant $C > 0$ such that
	\begin{equation}\label{SharpMaxbalasd}
		\sup_{B} |B|^{-\frac{\b}{Q}} \, \Psi^{-1}\big(|B|^{-1}\big) \, \big\|b(\cdot) - 2M^{\sharp}\big(b \, \chi_{_{B}}\big)(\cdot)\big\|_{L^{\Psi}(B)}\le C.
	\end{equation}
	
	$4.~$ There exists a constant $C > 0$ such that
	\begin{equation}\label{SharpMaxklYh}
		\sup_{B} |B|^{-1-\frac{\b}{Q}} \, \big\|b(\cdot) - 2M^{\sharp}\big(b \, \chi_{_{B}}\big)(\cdot)\big\|_{L^{1}(B)} \le C.
	\end{equation}
\end{theorem}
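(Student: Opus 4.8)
The plan is to mirror the structure of the proof of Theorem \ref{dnndk}, replacing the maximal operator $M$ and the local operator $M_B$ by the sharp maximal operator $M^{\sharp}$ and the quantity $2M^{\sharp}(b\chi_B)$, and to route everything through the equivalence $b\in\dot\Lambda_\beta(\G),\ b\ge0$ already available from (the sharp-maximal analogue of) Lemma \ref{LipschCharM02}. Concretely, I would establish the cycle $(1)\Rightarrow(2)\Rightarrow(3)\Rightarrow(4)\Rightarrow(1)$, with the key pointwise facts being (a) the analogue of \eqref{commaxcom}, namely $|[b,M^{\sharp}]f(x)|\le M_b(f)(x)+2b^-(x)M^{\sharp}f(x)$ and, when $b\ge0$, $|[b,M^{\sharp}]f(x)|\le M_b(f)(x)$; and (b) a localized identity showing that on a fixed ball $B$ one has $M^{\sharp}(\chi_B)(x)\approx 1$ and $b(x)M^{\sharp}(\chi_B)(x)-M^{\sharp}(b\chi_B)(x)$ is comparable to $b(x)-2M^{\sharp}(b\chi_B)(x)$ for $x\in B$ (this is where the factor $2$ enters, since $M^{\sharp}(\chi_B)(x)$ need not equal exactly $1$; one uses $\tfrac12\le M^{\sharp}(\chi_B)(x)\le 1$ for $x\in B$, which holds because the ball $B$ itself realizes $|B|^{-1}\int_B|\chi_B-(\chi_B)_B|=|B|^{-1}\int_B(1-1)\,$... more precisely from $(\chi_B)_B=1$ and testing against $B$ one gets a lower bound of the form $\frac12$ via a smaller ball, exactly as in the Euclidean computation in \cite[Theorem 4.13]{GulDerHasAMP}).

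For $(1)\Rightarrow(2)$ I would invoke $b\ge0\Rightarrow|[b,M^{\sharp}]f|\le M_b f$ together with Lemma \ref{pwslip}, which gives $|[b,M^{\sharp}]f(x)|\lesssim\|b\|_{\dot\Lambda_\beta}M_\beta f(x)$, and then apply Theorem \ref{AdamsFrMaxCharOrl} (using $\Phi\in\nabla_2$ and the assumption $\Psi^{-1}(t)\approx\Phi^{-1}(t)t^{-\beta/Q}$, which is exactly the index relation \eqref{adRieszCharOrl2} with $\alpha=\beta$, rewritten via $t=r^{-Q}$) to conclude boundedness from $L^\Phi(\G)$ to $L^\Psi(\G)$. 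For $(2)\Rightarrow(3)$ I would fix a ball $B$, use the localized identities from step (b) to write, for $x\in B$,
\begin{align*}
\big\|b(\cdot)-2M^{\sharp}(b\chi_B)(\cdot)\big\|_{L^\Psi(B)}\lesssim\big\|[b,M^{\sharp}](\chi_B)\big\|_{L^\Psi(B)}\le\|[b,M^{\sharp}]\|_{L^\Phi\to L^\Psi}\,\|\chi_B\|_{L^\Phi(\G)},
\end{align*}
and then multiply by $|B|^{-\beta/Q}\Psi^{-1}(|B|^{-1})$ and use Lemma \ref{charorlc} ($\|\chi_B\|_{L^\Phi}=1/\Phi^{-1}(|B|^{-1})$) together with $\Psi^{-1}(|B|^{-1})\approx\Phi^{-1}(|B|^{-1})|B|^{\beta/Q}$ to see the right-hand side is $\le C$; since $B$ is arbitrary this is \eqref{SharpMaxbalasd}. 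For $(3)\Rightarrow(4)$ I would simply apply Lemma \ref{ShVM01} (the inequality \eqref{lemHold}) to the function $b(\cdot)-2M^{\sharp}(b\chi_B)(\cdot)$ on $B$, exactly as in the $(3)\Rightarrow(4)$ step of Theorem \ref{dnndk}, turning the $L^\Psi(B)$ control into $L^1(B)$ control with the correct power of $|B|$.

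The remaining and genuinely substantive link is $(4)\Rightarrow(1)$: from control of $|B|^{-1-\beta/Q}\|b-2M^{\sharp}(b\chi_B)\|_{L^1(B)}$ I must recover both $b\in\dot\Lambda_\beta(\G)$ and $b\ge0$. Here I would run the $E$/$F$ splitting argument from the proof of Lemma \ref{LipschCharM02}: set $E=\{y\in B:b(y)\le b_B\}$, $F=\{y\in B:b(y)>b_B\}$, use $\int_E|b-b_B|=\int_F|b-b_B|$, and compare $|b(y)-b_B|$ with $|b(y)-2M^{\sharp}(b\chi_B)(y)|$ on $E$ — for this one needs $2M^{\sharp}(b\chi_B)(y)\ge|b_B|\ge b_B\ge b(y)$ for $y\in E$, which follows from the elementary bound $M^{\sharp}(g)(y)\ge\frac12|g_B|$ type estimates (more precisely, $2M^{\sharp}(b\chi_B)(y)\ge$ a quantity dominating $|b_B|$, via $M^{\sharp}(b\chi_B)(y)\ge|B|^{-1}\int_B|b-b_B|\,$ plus $\,\ge$ something controlling $b_B$; the clean route is the known pointwise fact $M_B(g)(y)\le 2M^{\sharp}(g\chi_B)(y)+|g_B|$, or equivalently that $2M^{\sharp}(b\chi_B)$ dominates $|b_B|$ on $B$, used in the Euclidean source). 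Then $\frac1{|B|^{1+\beta/Q}}\int_B|b-b_B|\le\frac2{|B|^{1+\beta/Q}}\int_E|b-b_B|\le\frac2{|B|^{1+\beta/Q}}\int_B|b-2M^{\sharp}(b\chi_B)|\le C$, and Lemma \ref{CharLipSp} yields $b\in\dot\Lambda_\beta(\G)$. For $b\ge0$, argue as in Lemma \ref{LipschCharM02}: since $b^+\le|b|$ and $2M^{\sharp}(b\chi_B)$ dominates $|b|$-averages appropriately, one gets $0\le b^-(y)\le 2M^{\sharp}(b\chi_B)(y)-b(y)+(\text{err})$, integrate over $B$, divide by $|B|$, bound by $C|B|^{\beta/Q}\to0$ as $|B|\to0$, and invoke Lebesgue differentiation to conclude $b^-=0$. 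I expect the main obstacle to be pinning down the precise pointwise inequalities relating $2M^{\sharp}(b\chi_B)$, $b_B$, and $M_B(b)$ on $B$ (the constant-$2$ bookkeeping and the one-sided comparison on $E$), since unlike the Hardy–Littlewood case there is no exact identity $M^{\sharp}(\chi_B)=1$; everything else is a direct transcription of the $M$-case proof with $M^{\sharp}$ in place of $M$.
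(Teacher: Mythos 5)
Your overall architecture coincides with the paper's: the cycle $(1)\Rightarrow(2)\Rightarrow(3)\Rightarrow(4)\Rightarrow(1)$, the pointwise bound $|[b,M^{\sharp}]f|\lesssim \|b\|_{\dot{\Lambda}_{\beta}}M_{\beta}f$ fed into Theorem \ref{AdamsFrMaxCharOrl}, testing on $\chi_{B}$ for $(2)\Rightarrow(3)$, Lemma \ref{ShVM01} for $(3)\Rightarrow(4)$, and the $E/F$ splitting plus $|b_{B}|\le 2M^{\sharp}(b\chi_{B})$ plus Lebesgue differentiation for $(4)\Rightarrow(1)$. (Your claim $|[b,M^{\sharp}]f|\le M_{b}f$ for $b\ge 0$ is off by a factor: the extra term $|b(x)f_{B}-(bf)_{B}|$ gives $\le 2M_{b}f$, but this is harmless since only $\lesssim M_{\beta}f$ is needed.)

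There is, however, a genuine gap in your step (b), which is the hinge of $(2)\Rightarrow(3)$. You assert only $\tfrac12\le M^{\sharp}(\chi_{B})(x)\le 1$ on $B$ and then claim that $b(x)M^{\sharp}(\chi_{B})(x)-M^{\sharp}(b\chi_{B})(x)$ is ``comparable'' to $\tfrac12\bigl(b(x)-2M^{\sharp}(b\chi_{B})(x)\bigr)$. Comparability of a \emph{difference} does not follow from two-sided bounds on one of its terms: if $M^{\sharp}(\chi_{B})(x)$ could be anything in $[\tfrac12,1]$, then $b(x)M^{\sharp}(\chi_{B})(x)-M^{\sharp}(b\chi_{B})(x)$ can vanish while $\tfrac{b(x)}{2}-M^{\sharp}(b\chi_{B})(x)$ does not (take $b(x)$ large), so $\|[b,M^{\sharp}](\chi_{B})\|_{L^{\Psi}(B)}$ would not control $\|b-2M^{\sharp}(b\chi_{B})\|_{L^{\Psi}(B)}$. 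What is actually needed, and what the paper uses (quoting Bastero--Milman--Ruiz), is the \emph{exact identity} $M^{\sharp}(\chi_{B})(x)=\tfrac12$ for every $x\in B$: writing $t=|B'\cap B|/|B'|$ one computes $|B'|^{-1}\int_{B'}|\chi_{B}-(\chi_{B})_{B'}|=2t(1-t)\le\tfrac12$, with the value $\tfrac12$ attained in the supremum by continuity of $t$ as $B'$ ranges over balls containing $x$ (so in particular your upper bound ``$\le 1$'' is not sharp and the quantity does not vary at all). With this identity one gets the clean equality $b(x)-2M^{\sharp}(b\chi_{B})(x)=2\,[b,M^{\sharp}](\chi_{B})(x)$ on $B$, and your argument then closes exactly as you outline. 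The analogous care is needed in $(4)\Rightarrow(1)$, where the precise input is $|b_{B}|\le 2M^{\sharp}(b\chi_{B})(x)$ for $x\in B$ (obtained from the same $2s(1-s)$ computation applied to $b\chi_{B}$); you gesture at the right fact, and the rest of your $E/F$ and $b^{-}=0$ arguments are the paper's.
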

\begin{proof}
We only need to prove $(1) \Rightarrow (2)$, $(2) \Rightarrow (3)$, $(3) \Rightarrow (4)$ and $(4) \Rightarrow (1)$.

$(1) \Rightarrow (2)$.  Since $b\in \dot{\Lambda}_{\beta}(\G)$ and $b \ge 0$, then for any locally integrable function $f$ and a.e. $x \in \G$
\begin{align*}
& \big|[b,M^{\sharp}]f(x)\big| = \Big| \sup_{B \ni x} \frac{b(x)}{|B|} \, \int_{B} |f(y) - f_{B}|dy
\\
& - \sup_{B \ni x} \frac{1}{|B|} \, \int_{B} |b(y) f(y) - (bf)_{B}|dy \Big|
\\
& \le \sup_{B \ni x} \frac{1}{|B|} \, \int_{B} \big|(b(y)-b(x)) f(y) + b(x) f_{B} - (bf)_{B}\big|dy
\\
& \le \sup_{B \ni x} \Big(\frac{1}{|B|} \, \int_{B} |b(y)-b(x)| \, |f(y)| + \big|b(x) f_{B} - (bf)_{B}\big|  \Big)
\\
& \lesssim \|b\|_{\dot{\Lambda}_{\beta}} \, M_{\b}f(x) + \sup_{B \ni x} \Big| \frac{b(x)}{|B|} \, \int_{B} f(z) dz - \frac{1}{|B|} \, \int_{B} b(z) f(z) dz \Big|
\\
& \lesssim \|b\|_{\dot{\Lambda}_{\beta}} \, M_{\b}f(x) + \sup_{B \ni x} \frac{1}{|B|} \, \int_{B} |b(x)-b(z)| |f(z)| dz
\\
& \lesssim \|b\|_{\dot{\Lambda}_{\beta}} \, M_{\b}f(x).
\end{align*}

Then, it follows from Theorem \ref{AdamsFrMaxCharOrl} that $[b,M^{\sharp}]$ is bounded from $L^{\Phi}(\G)$ to $L^{\Psi}(\G)$.

$(2) \Rightarrow (3)$. Assume $[b,M^{\sharp}]$ is bounded from $L^{\Phi}(\G)$ to $L^{\Psi}(\G)$, we will prove \eqref{SharpMaxbalasd}. For any fixed $\G$-ball $B$, we have (see \cite[page 3333]{BasMilRui} or \cite[page 1383]{ZhWuSun} for details)
\begin{align*}
M^{\sharp} \big(\chi_{_{B}}\big)(x) = \frac{1}{2}  ~~~ \mbox{for all } ~~~ x \in B. 	
\end{align*}

Then, for all $x \in B$,
\begin{align*}
b(x) - 2 M^{\sharp} \big(b \, \chi_{_{B}}\big)(x) & = 2 \Big(\frac{b(x)}{2} - M^{\sharp} \big(b \, \chi_{_{B}}\big)(x)\Big)
\\
& = 2 \Big(b(x) M^{\sharp} \big(\chi_{_{B}}\big)(x)  - M^{\sharp} \big(b \, \chi_{_{B}}\big)(x)\Big)
\\
& = [b,M^{\sharp}] \big(\chi_{_{B}}\big)(x).
\end{align*}

Since $[b,M^{\sharp}]$ is bounded from $L^{\Phi}(\G)$ to $L^{\Psi}(\G)$, then by applying
Lemma \ref{charorlc} and noting that $\Psi^{-1}(t) \thickapprox \Phi^{-1}(t)t^{-\frac{\b}{Q}}$, we have
\begin{align*}
&|B|^{-\frac{\b}{Q}} \, \Psi^{-1}\big(|B|^{-1}\big) \, \big\|b(\cdot) - 2M^{\sharp} \big(b \, \chi_{_{B}}\big)(\cdot)\big\|_{L^{\Psi}(B)} \notag
\\
& = 2 |B|^{-\frac{\b}{Q}}\Psi^{-1}\big(|B|^{-1}\big) \, \big\|[b,M^{\sharp}](\chi_{B})\big\|_{L^{\Psi}(B)}
\\
& \lesssim |B|^{-\frac{\b}{Q}}\Psi^{-1}\big(|B|^{-1}\big) \, \big\|\chi_{B}\big\|_{L^{\Phi}} \lesssim 1. \notag
\end{align*}
which implies \eqref{SharpMaxbalasd}.

$(3) \Rightarrow (4)$: We deduce \eqref{SharpMaxbalasd} from \eqref{SharpMaxklYh}. Assume \eqref{SharpMaxbalasd} holds, then for any fixed $\G$-ball $B$, it follows from Lemma \ref{lemHold} and \eqref{SharpMaxbalasd} that
\begin{align*}
	& |B|^{-1-\frac{\b}{Q}} \,  \left\|b(\cdot) - 2M^{\sharp} \big(b \, \chi_{_{B}}\big)(\cdot)\right\|_{L^{1}(B)}
	\\
	& \le 2 \, |B|^{-\frac{\b}{Q}} \, \Psi^{-1}\big(|B|^{-1}\big) \big\|b(\cdot) - 2M^{\sharp} \big(b \, \chi_{_{B}}\big)(\cdot) \big\|_{L^{\Psi}(B)}  \le C \, ,
\end{align*}
where the constant $C$ is independent of $B$. So we obtain \eqref{SharpMaxklYh}.

$(4) \Rightarrow (1)$. We first prove $b\in \dot{\Lambda}_{\beta}(\G)$. For any fixed $\G$-ball $B$, we have (see (2) in \cite{BasMilRui} for details)
\begin{align}\label{July11}
\big|b_{B}\big| \le 2M^{\sharp} \big(b \, \chi_{_{B}}\big)(x), ~~~ \mbox{for any} ~~~ x \in B.
\end{align}

For any $\G$-ball $B$, let $E = \{y \in B : b(y) \le b_B\}$ and $F = \{y \in B : b(y) > b_B\}$. The following equality is true (see \cite[page 3331]{BasMilRui}):
\begin{align*}
	& \int_{E}|b(y)-b_{B}|dy  = \int_{F}|b(y)-b_{B}|dy.
\end{align*}
Since $b(y) \le b_B \le |b_B| \le 2M^{\sharp} \big(b \, \chi_{_{B}}\big)(y)$ for any $y \in E$, we obtain
\begin{align*}
	& |b(y)-b_{B}| \le \big|b(y) - 2M^{\sharp} \big(b \, \chi_{_{B}}\big)(y)\big|, ~ y \in E .
\end{align*}
Then from Lemma \ref{ShVM01} and \eqref{4.2arxiv} we have
\begin{align*}
	& \frac{1}{|B|^{1+\frac{\b}{Q}}} \int_{B}|b(y)-b_{B}|dy = \frac{2}{|B|^{1+\frac{\b}{Q}}} \int_{E}|b(y)-b_{B}|dy
	\\
	& \le \frac{2}{|B|^{1+\frac{\b}{Q}}} \int_{E} \big|b(y) - 2M^{\sharp} \big(b \, \chi_{_{B}}\big)(y)\big|dy
	\\
	& \le \frac{2}{|B|^{1+\frac{\b}{Q}}} \int_{B} \big|b(y) - 2M^{\sharp} \big(b \, \chi_{_{B}}\big)(y)\big|dy .
\end{align*}

Applying Lemma \ref{CharLipSp} we get $b\in \dot{\Lambda}_{\beta}(\G)$.

In order to prove $b \ge 0$, it suffices to show $b^{-} = 0$.

Then, for all $x \in B$,
\begin{align*}
2M^{\sharp} \big(b \, \chi_{_{B}}\big)(x) - b(x) \ge  \big|b_{B}\big| - b(x) = \big|b_{B}\big| - b^{+}(x) + b^{-}(x).
\end{align*}

By \eqref{SharpMaxklYh}, there exists a constant $C > 0$ such that for any $\G$-ball $B$
\begin{align*}
C & \ge	\frac{1}{|B|^{1+\frac{\b}{Q}}} \int_{B} \big|b(y) - 2M^{\sharp} \big(b \, \chi_{_{B}}\big)(y)\big|dy
\\
& \ge \frac{1}{|B|^{1+\frac{\b}{Q}}} \int_{B} \big(2M^{\sharp} \big(b \, \chi_{_{B}}\big)(y) - b(y)\big)dy
\\
& \ge  \frac{1}{|B|^{1+\frac{\b}{Q}}} \int_{B} \big(\big|b_{B}\big| - b^{+}(y) + b^{-}(y)\big) dy
\\
& = \frac{1}{|B|^{\frac{\b}{Q}}} \Big(\big|b_{B}\big| - \frac{1}{|B|} \int_{B} b^{+}(y) dy + \frac{1}{|B|} \int_{B} b^{-}(y) dy\Big).
\end{align*}
This gives
\begin{align}\label{July78}
\big|b_{B}\big| - \frac{1}{|B|} \int_{B} b^{+}(y) dy + \frac{1}{|B|} \int_{B} b^{-}(y) dy \le C |B|^{\frac{\b}{Q}}
\end{align}
for all balls $B$ and the constant $C$ is independent of $B$.

Let the radius of $\G$-ball $B$ tends to $0$ (then $|B| \rightarrow 0$) with $x \in B$,
Lebesgue differentiation theorem assures that the limit of the left-hand side
of \eqref{July78} equals to
\begin{align*}
|b(x)| - b^{+}(x) + b^{-}(x) = 2b^{-}(x) = 2|b^{-}(x)|.
\end{align*}
Moreover, the right-hand side of \eqref{July78} tends to $0$. Thus, we have $b^{-} = 0$.
Then $b\in \dot{\Lambda}_{\beta}(\G)$ and $b \ge 0$.
The proof of Theorem \ref{dnndkCommSharpMax} is completed.
\end{proof}

\begin{remark}
Theorem \ref{dnndkCommSharpMax} also gives new characterizations of non-negative
Lipschitz functions that they differ from the ones in Theorem \ref{dnndk}.
\end{remark}

If we take $\Phi(t)=t^{p}$ and $\Psi(t)=t^{q}$ with $1\le p<\i$ and $1\le q \le\i$ at Theorem \ref{dnndkCommSharpMax}, we have the following result.
\begin{corollary}\label{ZhSharpCoxtFGH}
Let $0<\beta<1$, $b\in  L^1_{\rm loc}(\G)$, $b$ be a locally integrable function, $1< p<q\le\i$ and $\frac{1}{p}-\frac{1}{q}=\frac{\b}{Q}$.
Then the following statements are equivalent:
	
$1.~$ $b\in \dot{\Lambda}_{\beta}(\G)$ and $b \ge 0$.
	
$2.~$ $[b,M_\alpha]$ is bounded from $L^{p}(\G)$ to $L^{q}(\G)$.
	
$3.~$ There exists a constant $C > 0$ such that
\begin{equation*}
\sup_{B} \frac{1}{|B|^{\frac{\b}{Q}}} \, \left(\frac{1}{|B|}\int_{B} \big|b(x) - 2M^{\sharp}\big(b \, \chi_{_{B}}\big)(x)\big|^q dx\right)^{1/q} \le C.
\end{equation*}
	
$4.~$ There exists a constant $C > 0$ such that
\begin{equation*}
\sup_{B} \frac{1}{|B|^{1+\frac{\b}{Q}}} \int_{B} \big|b(x)-2M^{\sharp}\big(b \, \chi_{_{B}}\big)(x)\big| dx\le C.
\end{equation*}
\end{corollary}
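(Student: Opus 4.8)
The plan is to obtain this corollary as the direct specialization of Theorem \ref{dnndkCommSharpMax} to the power Young functions $\Phi(t)=t^{p}$ and $\Psi(t)=t^{q}$, so the whole argument reduces to verifying that these functions satisfy the structural hypotheses of that theorem and then translating its four equivalent statements into the present $L^{p}$--$L^{q}$ language. First I would check the membership conditions. For $1<p<\infty$ one has $0<t^{p}<\infty$ on $(0,\infty)$, hence $\Phi\in\mathcal{Y}$; and since $\Phi(Cr)=C^{p}\Phi(r)$, the $\nabla_2$-inequality $\Phi(r)\le\frac{1}{2C}\Phi(Cr)$ is equivalent to $C^{p-1}\ge 2$, which is solvable for some $C>1$ precisely when $p>1$. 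This is exactly why the hypothesis $1<p$ (and not merely $1\le p$) is imposed in the corollary.

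Next I would record the inverses $\Phi^{-1}(s)=s^{1/p}$ and $\Psi^{-1}(s)=s^{1/q}$ and verify the growth relation. The requirement $\Psi^{-1}(t) \thickapprox \Phi^{-1}(t)\,t^{-\b/Q}$ reads $t^{1/q}\thickapprox t^{1/p-\b/Q}$ for all $t>0$, which holds if and only if $\tfrac1p-\tfrac1q=\tfrac{\b}{Q}$, precisely the scaling hypothesis of the corollary. Thus every hypothesis of Theorem \ref{dnndkCommSharpMax} is in force, so its statements $(1)$--$(4)$ are equivalent, and it only remains to rewrite each one.

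Statement $(1)$ is unchanged. Since $L^{\Phi}(\G)=L^{p}(\G)$ and $L^{\Psi}(\G)=L^{q}(\G)$, statement $(2)$ becomes the $L^{p}\to L^{q}$ boundedness of the commutator $[b,M^{\sharp}]$ intended in part $(2)$. For statement $(3)$ I would substitute $\Psi^{-1}(|B|^{-1})=|B|^{-1/q}$ and $\|g\|_{L^{\Psi}(B)}=\big(\int_{B}|g|^{q}\big)^{1/q}$, so that
\begin{equation*}
|B|^{-\b/Q}\,\Psi^{-1}\big(|B|^{-1}\big)\,\big\|b-2M^{\sharp}(b\chi_{_{B}})\big\|_{L^{\Psi}(B)}
=\frac{1}{|B|^{\b/Q}}\Big(\frac{1}{|B|}\int_{B}\big|b-2M^{\sharp}(b\chi_{_{B}})\big|^{q}\,dx\Big)^{1/q},
\end{equation*}
which is exactly the corollary's $(3)$. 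Statement $(4)$ is the $L^{1}$ form \eqref{SharpMaxklYh}, whose weight $|B|^{-1-\b/Q}$ depends only on $\b$ and $Q$, so it carries over verbatim.

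The only delicate point is the endpoint $q=\infty$: there $\Psi$ is read as the Young function with $L^{\Psi}(\G)=L^{\infty}(\G)$, the $L^{q}$-norm in $(3)$ is interpreted as the essential supremum over $B$, and $\Psi^{-1}\equiv 1$, so the growth relation forces $\tfrac1p=\tfrac{\b}{Q}$, consistent with $q=\infty$. I expect no further obstacle, since all the analytic content of the equivalences already resides in Theorem \ref{dnndkCommSharpMax}; what remains is purely the bookkeeping of specializing the Young-function data and identifying the resulting norms.
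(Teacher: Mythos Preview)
Your proposal is correct and follows exactly the paper's approach: the corollary is obtained by specializing Theorem \ref{dnndkCommSharpMax} to the power Young functions $\Phi(t)=t^{p}$, $\Psi(t)=t^{q}$, and you have supplied the routine verifications (that $\Phi\in\mathcal{Y}\cap\nabla_2$ when $p>1$, that the relation $\Psi^{-1}(t)\thickapprox\Phi^{-1}(t)t^{-\b/Q}$ is equivalent to $\tfrac1p-\tfrac1q=\tfrac{\b}{Q}$, and the norm identifications) which the paper leaves implicit. You also correctly read the operator in item (2) as $[b,M^{\sharp}]$, which is clearly the intended object here (the appearance of $M_\alpha$ in the printed statement is a typographical slip inherited from the parallel Corollary \ref{ZhLpCoxt}).
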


\


\

{\bf \Large Acknowledgements} ~

\

The research of author was partially supported by grant of Cooperation Program 2532 TUBITAK - RFBR (RUSSIAN foundation for basic research) (Agreement number no. 119N455). 

\

{\bf \large Data Availibility}   Data is contained within the article.

\

{\bf \large Conflicts of interest} The author states that there is no conflict of interest.

\

\

\end{document}